\documentclass[12pt, reqno]{amsart}
\usepackage{xcolor}
\usepackage{hyperref}

\definecolor{DarkNavy}{HTML}{002060}    
\definecolor{Maroon}{HTML}{800000}      
\definecolor{PineGreen}{HTML}{004D20}   

\hypersetup{
    colorlinks=true,
    linkcolor=blue,   
    citecolor=blue,  
    urlcolor=blue       
}

\usepackage{amsmath,amsthm}

\usepackage{amsfonts}
\usepackage{amssymb}
\usepackage{comment}
\usepackage{graphicx}
\usepackage{xcolor}
\usepackage[left, pagewise, edtable]{lineno}
\usepackage{enumitem}

\usepackage[margin=1in,footskip=0.25in]{geometry}
\providecommand{\U}[1]{\protect\rule{.1in}{.1in}}

\theoremstyle{plain}
\newtheorem{theorem}{Theorem}[section] 

\newtheorem{corollary}[theorem]{Corollary}

\newtheorem{lemma}[theorem]{Lemma}
\newtheorem{proposition}[theorem]{Proposition}

\theoremstyle{definition}
\newtheorem{definition}[theorem]{Definition}

\newtheorem{example}[theorem]{Example}

\theoremstyle{plain}
\newtheorem{remark}[theorem]{Remark}

\title{On Continuous Wavelet Transform of Complex Radon Measures on $\mathbb{R}^{d}$}
\author[T. Banerjee]{Tathagata Banerjee}
\address{Tathagata Banerjee\\Department of Mathematics, Cochin University of Science and Technology, 682022, Cochin, Keralam, India}
\email{tathagatabanerjee85@gmail.com}

\author[V. Jayaprakash]{Vaishakh Jayaprakash}
\address{Vaishakh Jayaprakash\\Department of Mathematics, Cochin University of Science and Technology, 682022, Cochin, Keralam, India}
\email{vaishakhkj001@gmail.com}
\date{}

\begin{document}

\begin{abstract}
This paper investigates a necessary and sufficient condition for a measurable function on $\mathbb{R}^{\times} \times \mathbb{R}^{d}$ to be the continuous wavelet transform of a complex Radon measure on $\mathbb{R}^{d}$ under suitable regularity conditions on the wavelets. We define a new norm on the collection of such functions and show that it is isometrically isomorphic to the Banach algebra of complex Radon measures under convolution. We show a wavelet inversion formula for measures, in a restricted sense. Under additional hypothesis, we provide a reconstruction formula for the measure of closed rectangles in $\mathbb{R}^d$ using wavelet transforms. We have included supporting examples to substantiate these assumptions. 
\end{abstract}

\maketitle

\noindent \textbf{Key Words.}  Wavelet Transform, Complex Radon Measures, Reconstruction.\\
\noindent \textbf{Mathematics Subject Classification.} \subjclass[2020]{Primary 42C40; Secondary 46E27, 46F12.}
\section{Introduction}
The continuous wavelet transform has become an important mathematical tool with applications in signal and image processing, compressed sensing, and the theory of differential equations. It was initially defined to act on $L^{2}(\mathbb{R})$, introduced by Grossmann, Morlet and Paul in one of the foundational papers on wavelet analysis \cite{grossmann1985transforms}.  Subsequent research extended the theory from the one-dimensional setting to higher-dimensional Euclidean spaces, establishing a broader framework for wavelet analysis \cite{antoine19992,daubechies1992ten,Meyer1992Wavelets}. Further developments explored wavelet transforms and related constructions in other function spaces, including Lebesgue spaces $L^{p}$ and Sobolev spaces \cite{rieder1990wavelet}.

 The continuous wavelet transform has been extended to spaces of distributions to address different problems in harmonic analysis and the study of regularity of functions. In \cite{holschneider1994fractal}, Holschneider introduced the continuous wavelet transform for certain Schwartz distributions to investigate fractal wavelet dimensions and the local regularity of functions.  Subsequently, Pathak \cite{pathak2004wavelet} studied the continuity properties of the continuous wavelet transform defined for $D_{L^{p}} $- type tempered distributions and functions in Besov spaces, Sobolev spaces, and Lizorkin-Triebel spaces. Furthermore, they established various boundedness results for the wavelet transform on Besov spaces and Lizorkin-Triebel spaces. A related investigation concerns the wavelet transform of compactly supported distributions, which has been carried out in \cite{ashurov2011continuous}. They investigated convergence properties of these transforms for distributions in Sobolev spaces with negative smoothness. A weak distributional sense inversion formula for the continuous wavelet transform defined on the subspace of all tempered distributions has been investigated in \cite{ pandey2019continuous1, pandey2019continuous} by using compactly supported smooth wavelets with non-zero vanishing moments for every positive order.
 
 This paper investigates the continuous wavelet transform applied to the space of complex Radon measures on Euclidean space. It is well-known that any finite Radon measure defined on the Euclidean space is naturally associated with a tempered distribution. Therefore, one can define the continuous wavelet transform of a complex Radon measure via the theory of distributions. The primary objective of this paper is to characterize those functions in the time-scale plane that arise as a wavelet transform of complex Radon measures after imposing suitable  regularity assumptions on the analyzing wavelets. To the best of our knowledge, such a characterization in the measure theoretic setting has not been investigated in the literature. 
 \subsection*{Main Contributions}
 Our main contributions in this paper can be summarized as follows: First, we establish a necessary and sufficient condition for functions to arise as wavelet transforms of complex Radon measures [Theorem \ref{main:thm1}], and we further demonstrate that this correspondence constitutes a linear isometric isomorphism between the respective spaces [Theorem \ref{main;thm 2}]. A weak-sense wavelet inversion formula for complex Radon measures is obtained using the classical wavelet inversion formula [Proposition \ref{measure re}]. Along the way, we give an alternate proof of the classical wavelet inversion formula for functions belonging to $C_c^{d+1}(\mathbb{R}^d)$ [Lemma \ref{reconstruction lemma}]. Secondly, we develop a comprehensive reconstruction procedure capable of recovering the underlying measure along with its precise values on closed rectangles [Proposition \ref{proplimit}]. Finally, we formulate a necessary condition under which the underlying measure can be successfully recovered through the wavelet transform of characteristic functions [Proposition \ref{passinglimit}].

 \subsection*{Organization of the Paper}
The remainder of the paper is organized as follows: 
In Section \ref{preli}, we fix the notation and recall necessary background required for this paper. The measure-theoretic notations and preliminaries are introduced in Section \ref{measurebackground}. The continuous wavelet transforms of functions and their fundamental properties are discussed in Section \ref{continuousbackground}. We present our main results and examples in Section \ref{mainresult}. The statement of the central result [Theorem \ref{main:thm1}] of this paper and its proof is provided in Section \ref{subsectionmain}. In Section \ref{conseq}, we discuss some of the consequences of the main result, that includes isometric correspondence and reconstruction procedure [Theorem \ref{main;thm 2}, Proposition \ref{proplimit}, Proposition \ref{passinglimit}]. We have also added necessary examples and counterexamples to show exactly why the integrability assumptions are required, as used in the results.

\section{Notations and Background}\label{preli}
\subsection{Measure-theoretic Background}\label{measurebackground} This section fixes the notation and conventions employed in the remainder of the paper.
For $1\leq p< \infty$, the space $L^{p}(\mathbb{R}^{d})$ consists of (equivalence classes of) measurable functions $f: \mathbb{R}^{d}\to \mathbb{C}$ such that 
\begin{equation*}
    \|f\|_{p}= \left(\int_{\mathbb{R}^{d}} \left|f(x)\right|^{p} dx\right)^{\frac{1}{p}}< \infty
\end{equation*} where the functions that agree almost everywhere with respect to Lebesgue measure $dx$ on $\mathbb{R}^{d}$ are identified. When $p=2$, it is naturally equipped with the inner product 
\begin{equation*}
    {\left\langle f, g \right \rangle}_{L^{2}(\mathbb{R}^{d})} =\int_{\mathbb{R}^{d}} f(x) \overline{g(x)} dx, \quad f,g \in L^{2}(\mathbb{R}^{d}).
\end{equation*} The convolution of two functions $f,g \in L^{1}(\mathbb{R}^{d})$ is given by 
\begin{equation*}
    f* g (x)= \int_{\mathbb{R}^{d}} f(x-y) g(y) dy = \int_{\mathbb{R}^{d}} f(y) g(x-y)dy. 
\end{equation*} The Fourier transform of $f\in L^{1}(\mathbb{R}^{d})$ is given by 
\begin{equation*}
    \hat{f}(\zeta) =\int_{\mathbb{R}^{d}} f(x) e^{-ix.\zeta} dx,
\end{equation*} and its inverse Fourier transform is 
\begin{equation*}
    f(x)= \dfrac{1}{(2\pi)^d} \int_{\mathbb{R}^{d}} \hat{f}(\zeta)e^{ix.\zeta} d\zeta.
\end{equation*}

The space $C(\mathbb{R}^{d})$ denotes the set of all complex-valued continuous functions on $\mathbb{R}^{d}$. The space $C_{0}(\mathbb{R}^{d})$ denotes the set of all $f\in C(\mathbb{R}^{d})$ that vanishes at infinity. It is a Banach space with respect to the supremum norm defined as 
\begin{equation*}
    \|f\|_{\infty}= \sup_{x\in \mathbb{R}^{d}} |f(x)|, \quad f\in C_{0}(\mathbb{R}^{d}).
\end{equation*} We will be using the following classes of functions in the forthcoming sections of this paper: 
\begin{align*}
     C_{c}^{k}(\mathbb{R}^{d})&=\{f\in C(\mathbb{R}^{d}): \quad \mbox{$f$ is compactly supported and $k$ times continuously differentiable}\}. \\
     C_{c}^{\infty}(\mathbb{R}^{d})&=\{f\in C(\mathbb{R}^{d}): \quad \mbox{$f$ is compactly supported and infinitely differentiable}\}.
\end{align*} Note that $C_{c}^{\infty}(\mathbb{R}^{d})\subset C_{c}^{k}(\mathbb{R}^{d})\subset C_{0}(\mathbb{R}^{d})$, and both the spaces $C_{c}^{k}(\mathbb{R}^{d})$ and $C_{c}^{\infty}(\mathbb{R}^{d})$ are dense in $C_{0}(\mathbb{R}^{d})$ with respect to the supremum norm $\|\cdot \|_{\infty} $ for any $k\in \mathbb{N}.$\\

Let $x=(x_1, x_2 , \cdots, x_{d})\in \mathbb{R}^{d}.$ A \textit{multi-index} $\alpha$ is a $d-$tuple 
\begin{equation*}
    \alpha= (\alpha_1,\alpha_2, \cdots, \alpha_d)\in \mathbb{N}_{\mathrm{0}}^{d}.
\end{equation*} Associated to a multi-index $\alpha$ we have the following notations: 
\begin{align*}
    |\alpha|&= \alpha_1 + \alpha_2 + \cdots+ \alpha_d.\\
     \alpha! &= \alpha_{1}! \alpha_{2}!\cdots \alpha_{d}!.\\
      x^{\alpha}&= x_{1}^{\alpha_1}\cdot x_{2}^{\alpha_{2}}\cdots x_{d}^{\alpha_{d}}.
\end{align*} We set 
\begin{equation*}
    D^{\alpha}= \dfrac{\partial ^{|\alpha|}}{\partial x_{1}^{\alpha_1} \partial x_{2}^{\alpha_1}\cdots \partial x_{d}^{\alpha_d}}.
\end{equation*}

Let $\mathcal{M}(\mathbb{R}^{d})$ denote the collection of all complex Radon measures on $\mathbb{R}^{d}.$ For $\mu, \nu \in \mathcal{M}(\mathbb{R}^{d})$, define
\begin{equation*}
    (\mu+ \nu)( E) = \mu(E) + \nu( E) 
\end{equation*} and 
\begin{equation*}
    (\alpha \mu)(E) = \alpha \mu(E)
\end{equation*} for any Borel set $E\subset \mathbb{R}^{d}$ and $\alpha \in \mathbb{C}$. Then $\mathcal{M} (\mathbb{R}^{d})$ forms a complex vector space with respect to these operations. For $\mu\in \mathcal{M}(\mathbb{R}^{d})$, we denote its total variation norm as $\|\mu\|_{TV}$. Consequently, $\mathcal{M}\left(\mathbb{R}^{d}, \|\cdot\|_{TV}\right)$ is a Banach space. The convolution of $\mu, \nu \in \mathcal{M}(\mathbb{R}^{d})$ is defined via the relation 
\begin{equation*}
    \int_{\mathbb{R}^{d}}  \phi  d (\mu * \nu) = \int_{\mathbb{R}^{d}} \int_{\mathbb{R}^{d}} \phi(x+y) d\mu(x) d \nu(y), \quad \mbox{for all} \quad \phi \in C_{0} (\mathbb{R}^{d}).
\end{equation*}  Observe that $\| \mu * \nu \|_{TV} \leq \|\mu\|_{TV} \|\nu \|_{TV}$ and $ \mu * \nu = \nu * \mu$. The dirac delta measure $\delta_{\mathrm{0}}$ concentrated at the origin acts as the identity with respect to the convolution, i.e., $\mu* \delta_{\mathrm{0}}= \mu = \delta_{\mathrm{0}}* \mu$ for all $\mu\in \mathcal{M}(\mathbb{R}^{d})$.  Thus, $\left(\mathcal{M}(\mathbb{R}^{d}), *\right)$ is a commutative unital Banach algebra. For $f\in L^{1}(\mathbb{R}^{d})$ and $\mu\in \mathcal{M}(\mathbb{R}^{d})$, one can define the function $\mu* f$ by 
\begin{equation*}
    \mu* f (x) =\int_{\mathbb{R}^{d}} f(x-y) d\mu(y).
\end{equation*}
\subsection{The Continuous Wavelet Transform} \label{continuousbackground}In this section, we recall the definition of the continuous wavelet transform of funtions and discuss some of its fundamental properties.\\

A non-zero function $\psi\in L^{2}(\mathbb{R}^{d})$ is a \textit{wavelet}  if 
\begin{equation*}
    C_{\psi}:=\int_{\mathbb{R}^{d}} \frac{|\hat{\psi}(\zeta)|^{2}}{|\zeta|^{d}} d\zeta <\infty,
\end{equation*} where $|\zeta|= \sqrt{|\zeta_{1}|^{2}+ |\zeta_{2}|^{2}+ \cdots + |\zeta_{d}|^{2}},$ for all $\zeta= (\zeta_1, \dots, \zeta_d)\in \mathbb{R}^d$. A doubly-indexed family of wavelets from $\psi$ is defined as 
\begin{equation*}
    \psi_{a,b}(x) = |a|^{-\frac{d}{2}} \psi\left(\frac{x-b}{a}\right), \quad a\in \mathbb{R}^{\times}, b\in \mathbb{R}^{d}.
\end{equation*}  $\mathbb{R}^{\times}$ denotes the set of all non-zero real numbers.
The \textit{continuous wavelet transform } of $f\in L^{2}(\mathbb{R}^{d})$ with respect to a wavelet $\psi\in L^{2}(\mathbb{R}^{d})$ is the function $\mathcal{W}_{\psi} f: \mathbb{R}^{\times} \times \mathbb{R}^{d}\to \mathbb{C}$ defined by 
\begin{equation*}
    \mathcal{W}_{ \psi} f(a, b)=\left\langle f, \psi_{a, b}\right\rangle_{L^{2}(\mathbb{R}^{d})} =|a|^{-\frac{d}{2}} \int_{\mathbb{R}^{d}} f(x) \overline{\psi\left(\frac{x-b}{a}\right)} d x.
\end{equation*} It is easy to see that $\mathcal{W}_{\psi}f(a,b)= \displaystyle (f* \psi_{a}) (b)$, where $\psi_{a}(x)= \displaystyle |a|^{-\frac{d}{2}} \psi\left(-\dfrac{x}{a}\right)$. Note that $|\mathcal{W}_{\psi} f(a,b)|\leq \|f\|_{2} \|\psi\|_{2}$. Furthermore, $\mathcal{W}_{\psi}: L^{2}(\mathbb{R}^{d}) \longrightarrow L^{2}\left(\mathbb{R}^{\times} \times \mathbb{R}^{d},  db \frac{ da}{|a|^{d+1}}\right)$ is multiple of a linear isometry, and gives rise to the weak-sense \textit{wavelet inversion formula} (or \textit{reconstruction formula}) for all $f \in L^{2}(\mathbb{R}^{d})$ by
\begin{equation*}
    f=\dfrac{1}{C_{\psi}}\int_{\mathbb{R}^{\times}} \int_{\mathbb{R}^{d}}\left\langle f, \psi_{a, b}\right\rangle_{L^{2}(\mathbb{R}^{d})} \psi_{a, b} db\frac{ d a}{|a|^{d+1}}
\end{equation*}
Equivalently, 
$$\langle f, h\rangle_{L^{2}(\mathbb{R}^{d})}=\dfrac{1}{C_{\psi}}\int_{\mathbb{R}^{\times}} \int_{\mathbb{R}^{d}}\left\langle f, \psi_{a, b}\right\rangle_{L^{2}(\mathbb{R}^{d})}\left\langle\psi_{a, b}, h\right\rangle_{L^{2}(\mathbb{R}^{d})} db \dfrac{ d a}{|a|^{d+1}}$$
for all $h \in L^{2}(\mathbb{R}^{d})$.
It is worthwhile to note that for all $f \in L^{2}((\mathbb{R}^{d})$,
$$
f(x)= \dfrac{1}{C_{\psi}}  \int_{\mathbb{R}^{\times}} \int_{\mathbb{R}^{d}}\left\langle f, \psi_{a, b}\right\rangle_{L^{2}(\mathbb{R}^{d})} \psi_{a, b}(x) db \dfrac{ d a}{|a|^{d+1}}
$$
for almost all $x \in \mathbb{R}^{d}$ (See \cite[Theorem 1]{rao1994application}). The isometry property of $\mathcal{W}_{\psi}$ also tells that the image $\mathcal{W}_{\psi} (L^{2}(\mathbb{R}^{d}))$ is a reproducing kernel Hilbert space. We refer the reader to \cite[Chapter 2]{daubechies1992ten} for a detailed exposition of continuous wavelet transforms.

\section{Main Results and Discussions}\label{mainresult}
\subsection{Main Result}\label{subsectionmain} In this section, we extend the notion of continuous wavelet transform of functions to the setting of complex Radon measures and prove the main theorem of this paper.

\begin{definition}\label{measure}
     The \textit{continuous wavelet transform } of $\mu \in \mathcal{M}\left(\mathbb{R}^{d}\right)$ with respect to a suitable wavelet $\psi$ is 
    \begin{equation}
        \mathcal{W}_{\psi} \mu(a,b) = |a|^{-\frac{d}{2}}\int_{\mathbb{R}^{d}} \overline{\psi \left(\frac{x-b}{a}\right)} d\mu(x)= \mu* \psi_{a}(b)
    \end{equation} for all $(a,b)\in \mathbb{R}^{\times} \times \mathbb{R}^{d}.$
\end{definition}

 With a slight abuse of notation, we use the notation $\mathcal{W}_{\psi}$ for continuous wavelet transform acting on both functions and measures. The intended meaning should be clear from the context.\\

 We state the main result of this paper in the following Theorem \ref{main:thm1}. It is in the same vein as the Bochner theorem in the area of Fourier analysis, where we get a characterization of which functions arise as the Fourier transform of measures \cite{folland2016course}.
\begin{theorem} \label{main:thm1}
Let $\psi \in C_{c}^{d+1}(\mathbb{R}^{d})$ denote a wavelet such that it satisfies vanishing moment conditions
\begin{equation*}
    \int_{\mathbb{R}^{d}} x^{\alpha} \psi(x) dx =0, \quad \mbox{for all} \quad |\alpha|\leq d.
\end{equation*} Let $g: \mathbb{R}^{\times} \times \mathbb{R}^{d} \rightarrow \mathbb{C}$ denote a measurable function. Then $g=\mathcal{W}_{\psi} \mu$ for some $\mu \in \mathcal{M}(\mathbb{R}^{d})$ if and only if the following conditions hold:
\begin{enumerate}
\item[(a)] For all $(a,b)\in \mathbb{R}^{\times} \times \mathbb{R}^{d}$, 

$$g(a, b)= \dfrac{1}{C_{\psi}}  \int_{\mathbb{R}^{\times}} \int_{\mathbb{R}^{d}} g(r, s){\langle \overline{\psi_{a, b}}, \psi_{r, s}\rangle}_{L^{2}(\mathbb{R}^d)}ds\dfrac{ d r}{|r|^{d+1}}.$$

\item[(b)] There exists a constant $C>0$ such that for all $f \in C_{c}^{d+1}(\mathbb{R}^{d})$,
$$
\left|\int_{\mathbb{R}^{\times}} \int_{\mathbb{R}^{d}} g(a, b) \mathcal{W}_{\psi} f(a, b) db \dfrac{ d a}{|a|^{d+1}}\right| \leq C\|f\|_{\infty} .
$$
\end{enumerate}
\end{theorem}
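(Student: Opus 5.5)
The plan is the usual duality scheme behind Bochner-type characterizations. The central tool is a pointwise reconstruction formula for test functions: for $f\in C_c^{d+1}(\mathbb{R}^d)$ and every $x$,
\begin{equation*}
f(x)=\frac{1}{C_\psi}\int_{\mathbb{R}^\times}\int_{\mathbb{R}^d}\mathcal{W}_\psi f(a,b)\,\psi_{a,b}(x)\,db\frac{da}{|a|^{d+1}},
\end{equation*}
with the double integral absolutely convergent, uniformly in $x$. This is the content announced as Lemma \ref{reconstruction lemma}. I would prove it from size estimates on $\mathcal{W}_\psi f(a,b)$.
\begin{itemize}
\item For $|a|\le 1$, Taylor-expand $f$ to order $d$ around $b$. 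The vanishing moments kill the polynomial part, so $|\mathcal{W}_\psi f(a,b)|\lesssim |a|^{d/2+d+1}$, supported in $b$ within a bounded neighbourhood of $\mathrm{supp} f$.
\item For $|a|\ge1$, the trivial bound is $|\mathcal{W}_\psi f(a,b)|\le \|f\|_1\|\psi\|_\infty|a|^{-d/2}$, supported in $|b|\lesssim |a|$.
\end{itemize}
Combining these with $|\psi_{a,b}(x)|\le |a|^{-d/2}\|\psi\|_\infty$, the integrand is $O(|a|^{-d-1}\cdot |a|^{d}\cdot|a|^{-d})$ for large $|a|$ and $O(|a|^{-d-1}|a|^{d+1})$ for small $|a|$. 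Both are integrable, and this gives the absolute convergence. Pointwise equality everywhere then follows from the a.e.\ formula (\cite[Theorem 1]{rao1994application}) together with continuity in $x$, which comes from dominated convergence.

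\emph{Necessity.} Let $g=\mathcal{W}_\psi\mu$. For (b), integrate the reconstruction formula against $d\mu$ and apply Fubini, which is justified by the absolute bounds above and $|\mu|$ being finite. Since $\overline{\mathcal{W}_\psi\mu(a,b)}$ pairs with $\psi_{a,b}$, this gives
\begin{equation*}
\int f\,d\mu=\frac{1}{C_\psi}\iint \mathcal{W}_\psi f(a,b)\int\psi_{a,b}\,d\mu \;db\frac{da}{|a|^{d+1}}.
\end{equation*}
The inner integral $\int\psi_{a,b}\,d\mu$ is $\overline{\mathcal{W}_{\overline\psi}\ldots}$; I would track the conjugations so that the stated pairing $\iint g\,\mathcal{W}_\psi f$ appears. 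Then (b) holds with $C=C_\psi\|\mu\|_{TV}$. For (a), apply the same identity with $f=\overline{\psi_{a,b}}\in C_c^{d+1}$. The left side becomes $\int\overline{\psi_{a,b}}\,d\mu=g(a,b)$, and $\mathcal{W}_\psi f(r,s)=\langle\overline{\psi_{a,b}},\psi_{r,s}\rangle$, which is exactly the kernel in (a).

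\emph{Sufficiency.} Define the functional $\Lambda(f)=\frac{1}{C_\psi}\iint g\,\mathcal{W}_\psi f\,db\,\frac{da}{|a|^{d+1}}$ on $C_c^{d+1}(\mathbb{R}^d)$. It is linear, and by (b) it is bounded in sup norm. By density it extends to $C_0(\mathbb{R}^d)$, and the Riesz representation theorem yields $\mu\in\mathcal{M}(\mathbb{R}^d)$ with $\Lambda(f)=\int f\,d\mu$ and $\|\mu\|_{TV}\le C/C_\psi$. Now evaluate at $f=\overline{\psi_{a,b}}$. By (a), $\Lambda(\overline{\psi_{a,b}})=g(a,b)$, while by definition $\int\overline{\psi_{a,b}}\,d\mu=\mathcal{W}_\psi\mu(a,b)$. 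Hence $g=\mathcal{W}_\psi\mu$.

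\emph{Main obstacle.} The delicate point is implicit in the definition of $\Lambda$: (b) presupposes that $\iint g\,\mathcal{W}_\psi f$ makes sense, which I read as part of the hypothesis. The real work is the reconstruction lemma, namely the uniform absolute integrability near $a=0$ via the order-$d$ Taylor remainder. This is exactly where $\psi,f\in C_c^{d+1}$ and the moment conditions up to $|\alpha|\le d$ are needed, and also where Fubini in the necessity part is justified. I would handle it first, carefully splitting $|a|\le1$ and $|a|>1$ as above. The remaining steps are bookkeeping of complex conjugations, which must be consistent with the kernel $\langle\overline{\psi_{a,b}},\psi_{r,s}\rangle$ in (a).
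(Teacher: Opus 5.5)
Your proposal is essentially the paper's proof. It has the same split at $|a|=1$ (Taylor expansion to order $d$ plus the vanishing moments near $a=0$) for the integrability estimate of Lemma \ref{main:lemma1}, and the same pointwise reconstruction from the a.e.\ formula plus continuity via dominated convergence. It also proves necessity the same way, by Fubini against $|\mu|$ with $f=\overline{\psi_{a,b}}$ yielding (a), and sufficiency by the Riesz representation theorem plus (a) evaluated at $\overline{\psi_{a,b}}$.

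The conjugation bookkeeping you defer is handled in the paper by using the reconstruction formula with $\overline{\psi_{r,s}}(x)$ in place of $\psi_{r,s}(x)$ (Lemma \ref{reconstruction lemma} with $\psi^1=\psi$, $\psi^2=\overline{\psi}$), so that integrating against $d\mu$ produces $\mathcal{W}_\psi\mu$ directly. Note that this keeps the constant $C_\psi$ only when $\psi$ is real-valued, and in that case your $\int\psi_{a,b}\,d\mu$ already equals $\mathcal{W}_\psi\mu(a,b)$ and there is nothing to track. So both your argument and the paper's tacitly need $\psi$ to be real-valued.
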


First, we prove the following technical lemma: 
\begin{lemma}\label{main:lemma1}
    Let $\psi$ denote a wavelet satisfying the hypothesis of Theorem \ref{main:thm1}. Then
    \begin{equation*}
        \int_{\mathbb{R}^{\times}} \int_{\mathbb{R}^{d}} |a|^{-\frac{d}{2}}\left|\mathcal{W}_{\psi} f (a,b)\right| db \dfrac{ da}{|a|^{d+1}}< \infty
    \end{equation*} for any $f\in C_{c}^{d+1}(\mathbb{R}^{d}).$
\end{lemma}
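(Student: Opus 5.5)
The plan is to split the $a$-integral into the small-scale region $|a|\le 1$ and the large-scale region $|a|>1$. In each region we bound the inner quantity
\[
I(a):=\int_{\mathbb{R}^{d}} |\mathcal{W}_{\psi} f(a,b)|\,db
\]
by a power of $|a|$ that makes $|a|^{-\frac d2-d-1}I(a)$ integrable. Throughout, $\operatorname{supp}\psi\subset B(0,R)$ and $\operatorname{supp} f\subset B(0,K)$.

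\emph{Small scales, $|a|\le 1$.} Substitute $x=b+ay$ to get
\[
\mathcal{W}_{\psi} f(a,b)=|a|^{\frac d2}\int_{\mathbb{R}^d} f(b+ay)\overline{\psi(y)}\,dy .
\]
Expand $f(b+ay)$ by Taylor's formula to order $d$, with integral remainder of order $d+1$; this is legitimate because $f\in C_c^{d+1}$. Each term of degree $|\alpha|\le d$ is a multiple of $\int y^\alpha\overline{\psi(y)}\,dy$, which vanishes by the moment hypothesis. The remainder is bounded by $C|a|^{d+1}|y|^{d+1}\max_{|\beta|=d+1}\|D^\beta f\|_\infty$. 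This gives $|\mathcal{W}_\psi f(a,b)|\le C|a|^{\frac d2+d+1}$. For $|a|\le 1$ the function $b\mapsto \mathcal{W}_\psi f(a,b)$ vanishes unless $|b|\le K+R$, so
\[
I(a)\le C'|a|^{\frac d2+d+1}.
\]
The integrand is then $|a|^{-\frac d2}\,|a|^{-(d+1)}\, I(a)\le C'$, which is bounded on $0<|a|\le 1$ and hence integrable there.

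\emph{Large scales, $|a|>1$.} Here we move the moments onto $f$ instead. Use the crude bound $|\mathcal{W}_\psi f(a,b)|\le |a|^{-\frac d2}\|\psi\|_\infty\|f\|_1$. The support of $b\mapsto\mathcal{W}_\psi f(a,b)$ lies in a ball of radius $K+R|a|$, so it has measure $\lesssim |a|^d$. Therefore
\[
I(a)\lesssim |a|^{-\frac d2+d},
\]
and the integrand is $\lesssim |a|^{-\frac d2-\frac d2+d-d-1}=|a|^{-d-1}$, which is integrable on $|a|>1$ for every $d\ge1$.

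Combining the two regions proves the lemma. The main obstacle is the small-scale estimate. The Taylor remainder must be controlled uniformly in $b$, which uses the $C^{d+1}$ regularity of $f$ and compactness. The number of vanishing moments ($|\alpha|\le d$) is exactly what is needed to beat the factor $|a|^{-\frac d2-d-1}$; this is why the hypotheses of Theorem \ref{main:thm1} are tuned as they are. For safety one can check the power count explicitly: the decay exponent $\frac d2+d+1$ cancels the weight exactly, leaving a bounded integrand.
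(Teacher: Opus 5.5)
Your proof is correct and has the same structure as the paper's. You split at $|a|=1$; for small scales you use the Taylor expansion and the vanishing moments to get $|\mathcal{W}_\psi f(a,b)|\lesssim |a|^{\frac d2+d+1}$, and for large scales a crude bound gives $\int|\mathcal{W}_\psi f(a,b)|\,db\lesssim |a|^{\frac d2}$. The one difference is how you control the $b$-integral: you use a sup bound times the measure of the compact $b$-support, while the paper uses Fubini and translation invariance to obtain $L^1$ norms ($\|f\|_1\|\psi\|_1$ and $\|D^\alpha f\|_1$). Your version also avoids the paper's substitution $z=b+a\theta y$, where $\theta$ depends on $b$ and $y$. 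Your sentence about ``moving the moments onto $f$'' at large scales is a leftover; the argument never uses moments there.
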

\begin{proof}
     We provide the arguments for $0<|a|\leq 1$ and $|a|>1$ separately.\\

\noindent \textbf{\textit{Case 1}}: $(|a|>1)$ Note that
\begin{align*}
\int_{\mathbb{R}^{d}} |a|^{-\frac{d}{2}}\left|\mathcal{W}_{\psi} f(a, b)\right| d b & =|a|^{-\frac{d}{2}}\int_{\mathbb{R}^{d}}\left|\mathcal{W}_{\psi} f(a, b)\right| d b \\
& =|a|^{-\frac{d}{2}} \int_{\mathbb{R}^{d}}\left|\int_{\mathbb{R}^{d}} f(x) \overline{\psi_{a, b}}(x)d x\right| d b \\
&\leq |a|^{-d}  \int_{\mathbb{R}^{d}} \int_{\mathbb{R}^{d}} \left| f(x)\right| \left| {\psi \left(\dfrac{x-b}{a}\right)} \right|dx db\\
&=  \int_{\mathbb{R}^{d}}  \int_{\mathbb{R}^{d}}   \left|f(ay+b)\right| \left|{\psi(y)}\right| dy db \quad \left(\mbox{Put $y= \dfrac{x-b}{a}$, $dx= |a|^{d} dy$}\right)\\
& =  \int_{\mathbb{R}^{d}} \left[\int_{\mathbb{R}^{d}} \left|f(ay+b)\right| db\right] |{\psi(y)}|dy \quad \left(\mbox{By Fubini-Tonelli theorem}\right)\\
&= \int_{\mathbb{R}^{d}} \left[\int_{\mathbb{R}^{d}} |f(z)| dz\right]|\psi(y)| dy, \quad \left(\mbox{Put $z= ay+ b, dz=db$}\right)\\
&=  \|f\|_{1} \|\psi\|_{1}.
\end{align*} Set $C_1 = \|\psi\|_{1} \|f\|_{1}$. Thus, we have shown that  
$$
\int_{\mathbb{R}^{d}} |a|^{-\frac{d}{2}} \left|\mathcal{W}_{\psi} f(a,b)\right| db \leq C_{1} .
$$ Integrate this with respect to $\dfrac{ da}{|a|^{d+1}}$ over the domain $\{a\in \mathbb{R}^{\times}: |a|>1\}$, we get 
\begin{align*}
    \int_{|a|>1} \left[\int_{\mathbb{R}^{d}} |a|^{-\frac{d}{2}} \left|\mathcal{W}_{\psi} f(a,b)\right| db\right] \frac{da}{|a|^{d+1}}\leq C_{1} \int_{|a|>1} \dfrac{1}{|a|^{d+1}} da < \infty. 
\end{align*}

\noindent \textbf{\textit{Case 2}}:$(0< |a|\leq 1)$ Since $f\in C_{c}^{d+1}(\mathbb{R}^{d})$, then the Taylor series of $f(ay+b)$ around $b\in \mathbb{R}^{d}$ truncated at the $d+1$-th power term is 
\begin{equation*}
    f(ay+b)= \sum_{|\alpha|\leq d} \dfrac{D^{\alpha} {f (b)}}{\alpha!} a^{|\alpha|} y^{\alpha} + a^{d+1} \sum_{|\alpha|=d+1} \dfrac{D^{\alpha} f(b+ a\theta y)}{\alpha!} y^{\alpha}
\end{equation*} for some $\theta \in (0,1).$
 Observe that 
\begin{align*}
    \mathcal{W}_{\psi}f(a,b)&=|a|^{-\frac{d}{2}} \int_{\mathbb{R}^{d}} f(x) \overline{\psi\left(\frac{x-b}{a}\right)} dx\\
    &= |a|^{\frac{d}{2}}  \int_{\mathbb{R}^{d}} f(ay+b) \overline{\psi(y)} dy \quad \left( \mbox{Put} \quad y=\frac{x-b}{a}\right)\\
    &= |a|^{\frac{d}{2}} \int_{\mathbb{R}^{d}} \left[\sum_{|\alpha|\leq d} \dfrac{D^{\alpha} {f (b)}}{\alpha!} a^{|\alpha|} y^{\alpha} + a^{d+1} \sum_{|\alpha|=d+1} \dfrac{D^{\alpha} f(b+ a\theta y)}{\alpha!} y^{\alpha}\right] \overline{\psi(y)} dy \\
    &=|a|^{\frac{d}{2}} \left[ \sum_{|\alpha|\leq d} \dfrac{D^{\alpha} f(b)}{\alpha!} a^{|\alpha|}\int_{\mathbb{R}^{d}} y^{\alpha} \overline{\psi(y)} dy + a^{d+1} \sum_{|\alpha|= d+1} \dfrac{D^{\alpha} f (b+ a\theta y)}{\alpha!} \int_{\mathbb{R}^{d}} y^{\alpha} \overline{\psi(y)} dy\right]\\
    &=  |a|^{\frac{d}{2}} a^{d+1} \sum_{|\alpha|= d+1} \dfrac{D^{\alpha} f (b+ a\theta y)}{\alpha!} \int_{\mathbb{R}^{d}} y^{\alpha} \overline{\psi(y)} dy \quad \left(\mbox{Using vanishing moments}\right).
\end{align*} Take absolute value on both sides, 

\begin{align*}
    \left|\mathcal{W}_{\psi} f(a,b)\right| \leq |a|^{\frac{d}{2}+ d+1} \sum_{|\alpha|=d+1} \dfrac{\left| D^{\alpha} f(b+ a\theta y)\right|}{\alpha!} \int_{\mathbb{R}^{d}} \left|y^{\alpha}\right| |\psi(y)| dy.
\end{align*}
Integrate with respect to $db$, 

\begin{align*}
   \int_{\mathbb{R}^{d}} \left|\mathcal{W}_{\psi} f(a,b)\right| db &\leq |a|^{\frac{d}{2}+ d+1} \int_{\mathbb{R}^{d}} \left[ \sum_{|\alpha|=d+1} \dfrac{\left| D^{\alpha} f(b+ a\theta y)\right|}{\alpha!} \int_{\mathbb{R}^{d}} \left|y^{\alpha}\right| |\psi(y)| dy\right] db\\
   &=|a|^{\frac{d}{2}+d+1} \sum_{|\alpha|= d+1} \dfrac{1}{\alpha!} \int_{\mathbb{R}^{d}} \int_{\mathbb{R}^{d}} \left|y^{\alpha}\right| \left|D^{\alpha} f(b+ a\theta y)\right| |\psi(y)| dy db\\
   &= |a|^{\frac{d}{2}+d+1} \sum_{|\alpha|=d+1} \dfrac{1}{\alpha!} \int_{\mathbb{R}^{d}}\left[\int_{\mathbb{R}^{d}} \left|D^{\alpha}f(b+ a\theta y)\right|db\right] \left|y^{\alpha}\right| \left|\psi(y)\right|dy \left(\mbox{Using Fubini-Tonelli}\right) \\
   &= |a|^{\frac{d}{2}+d+1} \sum_{|\alpha|=d+1} \dfrac{1}{\alpha!} \int_{\mathbb{R}^{d}}\left[\int_{\mathbb{R}^{d}} \left|D^{\alpha}f(z)\right|dz\right] \left|y^{\alpha}\right| \left|\psi(y)\right|dy \\
   &= |a|^{\frac{d}{2}+ d+1} \sum_{|\alpha|=d+1} \dfrac{\|D^{\alpha}f\|_{1}}{\alpha!} \int_{\mathbb{R}^{d}} \left|y^{\alpha}\right| \left|\psi(y)\right| dy. 
\end{align*}
Since $\psi$ is compactly supported, then there exists $R>0$ such that $|y_{i}|\leq R$ for all $i=1,2,\cdots, d$. Therefore, 
\begin{align*}
    \int_{\mathbb{R}^{d}} \left|y^{\alpha}\right| \left|\psi(y)\right| dy = \int_{\mathbb{R}^{d}} |y_{1}|^{\alpha_1} |y_{2}|^{\alpha_2}\cdots |y_{d}|^{\alpha_d} |\psi(y)| dy =
    \leq R^{|\alpha|} \|\psi\|_{1}= R^{d+1}  \|\psi\|_{1}.
\end{align*} Note that the cardinality of the set $\{(\alpha_1, \alpha_2, \cdots, \alpha_d)\in \mathbb{N}_{\mathrm{0}}^{d}: |\alpha|=d+1\}$ is finite. Since $f\in C_{c}^{d+1}(\mathbb{R}^{d})$, then $D^{\alpha} f \in L^{1}(\mathbb{R}^{d})$. Thus, 
\begin{equation*}
     \int_{\mathbb{R}^{d}} \left|\mathcal{W}_{\psi} f(a,b)\right| db\leq |a|^{\frac{d}{2}+d+1} R^{d+1} \|\psi\|_{1} \sum_{|\alpha|=d+1} \dfrac{\|D^{\alpha} f\|_{1}}{\alpha!}.
\end{equation*} Set $C_{2}= \displaystyle R^{d+1} \|\psi\|_{1} \sum_{|\alpha|=d+1} \dfrac{\|D^{\alpha} f\|_{1}}{\alpha!}$, and hence $C_{2}<\infty.$
 Multiplying with $|a|^{-\frac{d}{2}}$ gives 
\begin{equation*}
    |a|^{-\frac{d}{2}} \int_{\mathbb{R}^{d}} \left|\mathcal{W}_{\psi} f(a,b)\right| db \leq C_{2} |a|^{d+1}.
\end{equation*}
Integrating with respect to $\dfrac{da}{|a|^{d+1}}$ over $\{a\in \mathbb{R}^{\times} :0<|a|\leq 1\}$ yields that 
\begin{equation*}
    \int_{0<|a|\leq 1} |a|^{-\frac{d}{2}} \left[\int_{\mathbb{R}^{d}} \left|\mathcal{W}_{\psi} f(a,b)\right| db\right] \frac{da}{|a|^{d+1}} \leq C_{2} \int_{0<|a|\leq 1}  da =2C_{2}< \infty.
\end{equation*}
As a consequence, 
\begin{align*}
    \int_{\mathbb{R}^{\times}} \int_{\mathbb{R}^{d}} |a|^{-\frac{d}{2}} \left|\mathcal{W}_{\psi} f(a,b)\right| db \dfrac{ da}{|a|
    ^{d+1}}&=\int_{|a|>1} \int_{\mathbb{R}^{d}} |a|^{-\frac{d}{2}} \left| \mathcal{W}_{\psi} f(a,b)\right|  db \dfrac{ da }{|a|^{d+1}} \\ & + \int_{0<|a|\leq 1} \int_{\mathbb{R}^{d}} |a|^{-\frac{d}{2}} \left| \mathcal{W}_{\psi} f(a,b)\right| db \dfrac{ da }{|a|^{d+1}}< \infty.
   \end{align*} This completes the proof.
\end{proof} 

 The following result is an easy consequence of the above Lemma \ref{main:lemma1}.
\begin{lemma}\label{reconstruction lemma}
    Let $\psi^1, \psi^2$ denote wavelets satisfying the hypothesis of Theorem \ref{main:thm1}. Then for all $f\in C_{c}^{d+1}(\mathbb{R}^{d})$
    \begin{equation}\label{allx}
        f(x)= \dfrac{1}{C_{\psi^{1}, \psi^{2}}}\int_{\mathbb{R}^{\times}} \int_{\mathbb{R}^{d}} {\left \langle f, \psi^{1}_{r,s} \right \rangle}_{L^{2}(\mathbb{R}^{d})} \psi^{2}_{r,s}(x) ds \dfrac{ dr}{|r|^{d+1}},
    \end{equation} for all $x\in \mathbb{R}^{d}$. Note that $C_{\psi^{1}, \psi^{2}}= \displaystyle \int_{\mathbb{R}^{d}} \dfrac{\hat{\psi^{1}}(\zeta) \overline{\hat{\psi^{2}}(\zeta)}}{\left|\zeta\right|^{d}}d\zeta< \infty$.
\end{lemma}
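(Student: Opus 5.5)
We will prove \eqref{allx} pointwise by working in the Fourier domain. Lemma \ref{main:lemma1} will justify every interchange of integrals, and continuity of both sides in $x$ will upgrade almost-everywhere statements to statements for all $x$.

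\textbf{Step 1 (absolute convergence and continuity of the right-hand side).} Since $\psi^2\in C_c^{d+1}(\mathbb{R}^d)$, we have $|\psi^2_{r,s}(x)|\le |r|^{-d/2}\|\psi^2\|_\infty$. Lemma \ref{main:lemma1}, applied with $\psi=\psi^1$, then gives
$$\int_{\mathbb{R}^{\times}}\int_{\mathbb{R}^d}\bigl|\mathcal{W}_{\psi^1}f(r,s)\bigr|\,\bigl|\psi^2_{r,s}(x)\bigr|\,ds\frac{dr}{|r|^{d+1}}\le \|\psi^2\|_\infty\int_{\mathbb{R}^{\times}}\int_{\mathbb{R}^d}|r|^{-\frac d2}\bigl|\mathcal{W}_{\psi^1}f(r,s)\bigr|\,ds\frac{dr}{|r|^{d+1}}<\infty .$$
This bound is uniform in $x$. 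Each $\psi^2_{r,s}$ is continuous, so dominated convergence shows that the right-hand side $F(x)$ of \eqref{allx} is a bounded continuous function of $x$. Since $f$ is also continuous, it will suffice to show $F=f$ as distributions, for instance by showing $\langle F,h\rangle=\langle f,h\rangle$ for all $h\in C_c^\infty(\mathbb{R}^d)$, or equivalently $\hat F=\hat f$.

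\textbf{Step 2 (Fourier computation).} For fixed $r$, set $G_r(x)=\int_{\mathbb{R}^d}\mathcal{W}_{\psi^1}f(r,s)\psi^2_{r,s}(x)\,ds$. This is a convolution: $\mathcal{W}_{\psi^1}f(r,\cdot)=f*\psi^1_r$, and $\psi^2_{r,s}(x)=|r|^{-d/2}\psi^2((x-s)/r)$. Hence
$$\widehat{G_r}(\zeta)=|r|^{d}\,\hat f(\zeta)\,\overline{\hat{\psi^1}(r\zeta)}\,\hat{\psi^2}(r\zeta)$$
up to the precise conjugation convention, which must be tracked so that the constant matches $C_{\psi^1,\psi^2}$; the ordering of $\hat\psi^1,\hat\psi^2$ and conjugates may have to be adjusted. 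By Step 1 and Fubini, $\hat F(\zeta)=\int_{\mathbb{R}^\times}\widehat{G_r}(\zeta)\,\frac{dr}{|r|^{d+1}}$, provided this integral converges absolutely. The substitution $\eta=r\zeta$ turns $\int_{\mathbb{R}^\times}\overline{\hat{\psi^1}(r\zeta)}\hat{\psi^2}(r\zeta)\frac{dr}{|r|}$ into $\int_{\mathbb{R}^d}\overline{\hat{\psi^1}(\eta)}\hat{\psi^2}(\eta)|\eta|^{-d}\,d\eta$ (polar coordinates), at least when the wavelets are radial. In general one would instead integrate against $h$ and average over directions.

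\textbf{Main obstacle.} The $r$-integral is one-dimensional while $C_{\psi^1,\psi^2}$ is a $d$-dimensional integral. Equality $\int_{\mathbb{R}^\times}\overline{\hat{\psi^1}}\hat{\psi^2}(r\zeta)\frac{dr}{|r|}=C_{\psi^1,\psi^2}$ holds for every $\zeta\ne0$ only when the integrand depends on $|\zeta|$ alone. So for non-radial wavelets the normalization must be checked carefully, possibly by interpreting $C_{\psi^1,\psi^2}$ via the admissibility of the dilation-only group. I would first handle the computation by pairing with $h$ and using Plancherel, where the $|r|^{d}$ factors cancel cleanly. I would then invoke the known weak reconstruction formula (the polarized form of the $L^2$ inversion recalled in Section \ref{continuousbackground}) to identify the constant. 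This gives $\langle F,h\rangle=\langle f,h\rangle$ for all $h$, hence $F=f$ a.e., and by continuity everywhere.
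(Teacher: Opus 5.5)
Your Step 1 and the final upgrade from a.e.\ to everywhere by continuity are exactly the paper's argument. Your way of getting the a.e.\ identity (pair with $h\in C_c^\infty(\mathbb{R}^d)$, interchange integrals using Lemma \ref{main:lemma1} and $h\in L^1$, then use the polarized weak formula) is also sound; the paper simply asserts ``we know that $f=h$ almost everywhere''. The gap is the last step, and it is not a bookkeeping issue. ``Invoke the known weak reconstruction formula to identify the constant'' assumes exactly what your own computation calls into question. That computation is correct, and carried through it settles the matter negatively. Every interchange is absolutely convergent under the hypotheses, since $|\hat{\psi^j}(\xi)|\lesssim\min(|\xi|^{d+1},|\xi|^{-d-1})$, and one gets
\[
\langle F,h\rangle=\frac{(2\pi)^{-d}}{C_{\psi^1,\psi^2}}\int_{\mathbb{R}^d}\hat f(\zeta)\,\overline{\hat h(\zeta)}\,m(\zeta)\,d\zeta,\qquad m(\zeta)=\int_{\mathbb{R}^{\times}}\overline{\hat{\psi^1}(r\zeta)}\,\hat{\psi^2}(r\zeta)\,\frac{dr}{|r|}.
\]
So \eqref{allx} for all $f$ forces $m\equiv C_{\psi^1,\psi^2}$ a.e. But $m$ is homogeneous of degree $0$, and polar coordinates give $\int_{S^{d-1}}m\,d\sigma=2\,\overline{C_{\psi^1,\psi^2}}$. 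Hence $|S^{d-1}|\,C_{\psi^1,\psi^2}=2\,\overline{C_{\psi^1,\psi^2}}$, which is impossible for $d\ge 2$ when $C_{\psi^1,\psi^2}\neq0$.

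Your radial remark is also wrong. For radial wavelets $m\equiv\frac{2}{|S^{d-1}|}\,\overline{C_{\psi^1,\psi^2}}$: polar coordinates bring in the factor $|S^{d-1}|$, while $\mathbb{R}^{\times}$ only supplies the two directions $\pm\zeta/|\zeta|$. Concretely, take $d=2$ and $\psi^1=\psi^2=\Delta^2\phi$ with $\phi\in C_c^\infty(\mathbb{R}^2)$ real and radial; this satisfies all the hypotheses. Then the right-hand side of \eqref{allx} equals $f(x)/\pi$. Averaging over directions cannot rescue this, because the identity must hold against every $h$. In $d=1$ the same computation gives $m\equiv\overline{C_{\psi^1,\psi^2}}$, so your argument closes exactly when $C_{\psi^1,\psi^2}$ is real; in general the constant should be $\overline{C_{\psi^1,\psi^2}}$.

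The paper's proof rests on the same asserted a.e.\ identity, so it does not get past this either. The $d$-dimensional constant $\int|\hat\psi|^2|\zeta|^{-d}\,d\zeta$ is the natural one (up to normalization of Haar measure on $SO(d)$) only when the transform also integrates over rotations. With scalar dilations alone, one needs $m$ constant (e.g.\ radial wavelets) and must divide by $m$. So your proposal, like the paper's proof, establishes the lemma for $d=1$ with real $C_{\psi^1,\psi^2}$. For $d\ge 2$ the fix is a corrected normalization or an extra hypothesis, not a better argument.
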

\begin{proof}
    Let 
    $$h(x) =\dfrac{1}{C_{\psi^{1}, \psi^{2}}}    \int_{\mathbb{R}^{\times}} \int_{\mathbb{R}^{d}}\left\langle f, \psi^1_{a, b}\right\rangle_{{L}^{2}(\mathbb{R}^{d})} \psi^2_{a, b}(x) db \frac{ d a}{|a|^{d+1}}.$$
    We claim that $h$ is continuous on $\mathbb{R}^{d}$. Let $x_{0} \in \mathbb{R}^{d}$ and let $\left(x_{n}\right)$ be a sequence converging to $x_{0}.$ It is enough to show that $\displaystyle \lim _{n \rightarrow \infty} h\left(x_{n}\right)=h\left(x_{0}\right)$. Define $$M_{n}(a, b)=\left\langle f, \psi^1_{a, b}\right\rangle_{L^{2}( \mathbb{R}^{d})} \psi^2_{a, b}\left(x_{n}\right)$$ for all $n\in \mathbb{N}$.
The continuity of $\psi^2_{a, b}$ implies that
$$
M(a, b):=\lim _{n \rightarrow \infty} M_{n}(a, b)=\left\langle f, \psi^1_{a, b}\right\rangle_{L^{2}(\mathbb{R}^{d})} \psi^2_{a, b}\left(x_{0}\right).
$$ 
Note that for all $n \in \mathbb{N}$
\begin{align*}
\left|M_{n}(a, b)\right| & =\left|\left\langle f, \psi^1_{a, b}\right\rangle_{L^{2}(\mathbb{R}^{d})} \psi^2_{a, b}\left(x_{n}\right)\right| \\
& =\left|\mathcal{W}_{\psi^1} f(a, b)\right|\left|\psi^2_{a, b}\left(x_{n}\right)\right| \\
& \leq |a|^{-\frac{d}{2}}\|\psi^2\|_{\infty}|\mathcal{W}_{\psi^1} f(a, b)|.
\end{align*}
The last step follows from $\psi^2$ being continuous and compactly supported and hence bounded. Hence we show that the sequence of functions $\left(M_{n}\right)_{n \in \mathbb{N}}$ is pointwise bounded and the bound is independent of $n \in \mathbb{N}$. Also it follows from Lemma \ref{main:lemma1} that this bound is integrable with respect to $db \dfrac{ da}{|a|^{d+1}}$. 
Therefore by dominated convergence theorem, 
\begin{equation*}
    \lim_{n\to \infty} h(x_{n}) = \lim_{n\to \infty} \int_{\mathbb{R}^{\times}} \int_{\mathbb{R}^{d}} M_{n}(a,b)db\dfrac{ da}{|a|^{d+1}} = \int_{\mathbb{R}^{\times}} \int_{\mathbb{R}^{d}} M(a,b) db\dfrac{da }{|a|^{d+1}}= h(x_{0}).
\end{equation*} This proves that $h$ is continuous at $x_{0}\in \mathbb{R}^{d}$. Since $x_{0}$ is arbitrary, then $h$ is continuous on $\mathbb{R}^{d}$. We know that $f(x)=h(x)$ for almost all $x\in \mathbb{R}^{d}$. Then continuity of $f$ implies that $f(x)= h(x)$ for all $x\in \mathbb{R}^{d}$. This completes the proof.
\end{proof}

In particular, let $\psi$ denote a wavelet satisfying the hypothesis of Theorem \ref{main:thm1}. Then by choosing $\psi^1= \psi$ and $\psi^2=\overline{\psi}$ in Lemma \ref{reconstruction lemma} we get the following reconstruction formula
\begin{equation}
  f(x)= \dfrac{1}{C_{\psi}}\int_{\mathbb{R}^{\times}} \int_{\mathbb{R}^{d}} {\left \langle f, \psi_{r,s} \right \rangle}_{L^{2}(\mathbb{R}^{d})} \overline{\psi_{r,s}}(x) ds \dfrac{ dr}{|r|^{d+1}}, \label{formula}  
\end{equation} for all $x\in \mathbb{R}^d$.
\begin{remark}
    Note that the above lemma \ref{reconstruction lemma} in dimension one, i.e., $\mathbb{R}$, is true for bounded integrable functions at the continuity points under mild regularity assumptions on $\psi^1$ and $\psi^2$, which is provided in \cite[Proposition 2.4.2]{daubechies1992ten}, and the Lemma \ref{reconstruction lemma} is a special case. In particular, if the wavelets $\psi^{1}$ and $\psi^{2}$ satisfy $\psi^{1}, \psi^{2} \in L^{1}(\mathbb{R})$, $\psi^{1}$ is differentiable , $x\psi_2 \in L^{1}(\mathbb{R})$, $(\psi^{2})'\in L^{2}(\mathbb{R})$ and  $\hat{\psi^{1}}(0)=0= \hat{\psi^{2}}(0)$, then for all bounded $f\in L^{2}(\mathbb{R})$ Equation \ref{allx} holds pointwise at the points where $f$ is continuous.
\end{remark} 
 The following Proposition is a wavelet inversion formula for measures, applicable for functions in $C_{c}^{d+1}(\mathbb{R}^{d}).$
\begin{proposition}\label{measure re}
    Let $\psi$ denote a wavelet satisfying the hypothesis of Theorem \ref{main:thm1} and let $\mu \in \mathcal{M}(\mathbb{R}^{d})$. Then 
    \begin{equation}\label{measurereconstruction}
            \int_{\mathbb{R}^{d}} f(x) d\mu(x) = \dfrac{1}{C_{\psi}}\int_{\mathbb{R}^{\times}} \int_{\mathbb{R}^{d}} \mathcal{W}_{\psi} \mu(a,b) \mathcal{W}_{\psi} f(a,b) db \dfrac{da}{|a|^{d+1}}
        \end{equation} for all $f\in C_{c}^{d+1}(\mathbb{R}^{d}).$ 
\end{proposition}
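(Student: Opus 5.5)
The plan is to integrate the pointwise reconstruction formula \eqref{formula} of Lemma \ref{reconstruction lemma} against $d\mu$ and then exchange the order of integration by Fubini's theorem, using Lemma \ref{main:lemma1} to justify the exchange. Fix $f\in C_c^{d+1}(\mathbb{R}^d)$. By \eqref{formula}, for every $x\in\mathbb{R}^d$,
\begin{equation*}
f(x)=\frac{1}{C_{\psi}}\int_{\mathbb{R}^{\times}}\int_{\mathbb{R}^{d}} \mathcal{W}_{\psi}f(a,b)\,\overline{\psi_{a,b}(x)}\,db\,\frac{da}{|a|^{d+1}}.
\end{equation*}
The identity holds for all $x$, not merely almost everywhere in Lebesgue measure. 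This matters because $\mu$ may be singular. Integrating both sides with respect to $d\mu(x)$ over $\mathbb{R}^d$ gives $\int f\,d\mu$ on the left.

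On the right, I will interchange the $d\mu(x)$ integral with the $db\,\frac{da}{|a|^{d+1}}$ integral. Formally, the inner integral becomes
\begin{equation*}
\int_{\mathbb{R}^d}\overline{\psi_{a,b}(x)}\,d\mu(x)=|a|^{-\frac d2}\int_{\mathbb{R}^d}\overline{\psi\left(\tfrac{x-b}{a}\right)}\,d\mu(x)=\mathcal{W}_{\psi}\mu(a,b)
\end{equation*}
by Definition \ref{measure}. This yields exactly \eqref{measurereconstruction}.

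The main step is justifying Fubini's theorem for the product of the complex measure $\mu$, via its polar decomposition $d\mu=h\,d|\mu|$ with $|h|=1$ and $|\mu|$ finite, with the $\sigma$-finite measure $db\,\frac{da}{|a|^{d+1}}$ on $\mathbb{R}^{\times}\times\mathbb{R}^d$. The integrand $(x,a,b)\mapsto \mathcal{W}_{\psi}f(a,b)\,\overline{\psi_{a,b}(x)}$ is jointly continuous, hence Borel measurable, since $\psi$ is continuous and $\mathcal{W}_\psi f$ is continuous in $(a,b)$. Its absolute value is bounded by $\|\psi\|_{\infty}\,|a|^{-\frac d2}\,|\mathcal{W}_{\psi}f(a,b)|$, which is independent of $x$. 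Hence
\begin{equation*}
\int_{\mathbb{R}^d}\int_{\mathbb{R}^{\times}}\int_{\mathbb{R}^{d}}\left|\mathcal{W}_{\psi}f(a,b)\,\overline{\psi_{a,b}(x)}\right|db\,\frac{da}{|a|^{d+1}}\,d|\mu|(x)\le \|\psi\|_{\infty}\|\mu\|_{TV}\int_{\mathbb{R}^{\times}}\int_{\mathbb{R}^{d}}|a|^{-\frac d2}|\mathcal{W}_{\psi}f(a,b)|\,db\,\frac{da}{|a|^{d+1}},
\end{equation*}
and the right-hand side is finite by Lemma \ref{main:lemma1}. Fubini--Tonelli therefore applies, and the interchange is legitimate.

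This dominating-function step is the only real obstacle, and Lemma \ref{main:lemma1} was evidently designed to handle it. The remaining points are bookkeeping: the complex conjugations match Definition \ref{measure}, and the normalization is $C_{\psi}=C_{\psi,\overline{\psi}}$, consistent with the special case \eqref{formula} used above.
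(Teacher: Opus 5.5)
Your proposal is correct and is essentially the paper's own proof. You integrate the everywhere-valid formula \eqref{formula} against $d\mu$ and justify the interchange by the $x$-independent bound $\|\psi\|_{\infty}|a|^{-d/2}|\mathcal{W}_{\psi}f(a,b)|$, which Lemma \ref{main:lemma1} makes integrable; your use of $d|\mu|$ and the measurability remark are slightly more careful than the paper.

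One caveat, inherited from the paper rather than introduced by you: the normalization $C_{\psi,\overline{\psi}}=C_{\psi}$ behind \eqref{formula}, which you list as bookkeeping, holds automatically only for real-valued $\psi$. In the paper's notation $C_{\psi,\overline{\psi}}=\int_{\mathbb{R}^{d}}\hat{\psi}(\zeta)\hat{\psi}(-\zeta)|\zeta|^{-d}\,d\zeta$, and this equals $-C_{\psi}$ for $\psi=i\phi$ with $\phi$ real, so both your argument and the paper's tacitly assume $\psi$ is real.
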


\begin{proof}
    Observe that for $f\in C_{c}^{d+1} (\mathbb{R}^{d}),$
\begin{align*}
    \int_{\mathbb{R}^{\times}} \int_{\mathbb{R}^{d}} \int_{\mathbb{R}^{d}} \left| \psi_{a,b}(x) \mathcal{W}_{\psi} f(a,b)\right|  d\mu(x) db \dfrac{da}{|a|^{d+1}}&= \int_{\mathbb{R}^{\times}} \int_{\mathbb{R}^{d}} \int_{\mathbb{R}^{d}}  |a|^{-\frac{d}{2}} \left|\psi \left(\dfrac{x-b}{a}\right)\right| \left|\mathcal{W}_{\psi} f(a,b)\right| d\mu(x) db \dfrac{da }{|a|^{d+1}}\\
    &\leq \|\psi\|_{\infty} \|\mu\|_{TV} \int_{\mathbb{R}^{\times}} \int_{\mathbb{R}^{d}} |a|^{-\frac{d}{2}}\left|\mathcal{W}_{\psi} f(a,b)\right| db \dfrac{ da}{|a|^{d+1}}< \infty.
\end{align*} The finiteness of right hand side follows from Lemma \ref{main:lemma1}. Thus, 
\begin{align*}
      &\dfrac{1}{C_{\psi}}      \int_{\mathbb{R}^{\times}} \int_{\mathbb{R}^{d}} g(r,s) {\langle f, \psi_{r,s} \rangle}_{L^{2}(\mathbb{R}^{d})} ds \dfrac{ dr }{|r|^{d+1}}\\&=\dfrac{1}{C_{\psi}}\int_{\mathbb{R}^{\times}} \int_{\mathbb{R}^{d}} \mathcal{W}_{\psi}\mu(r,s) {\langle f, \psi_{r,s} \rangle}_{L^{2}(\mathbb{R}^{d})} ds \dfrac{ dr }{|r|^{d+1}}\\
            & =\dfrac{1}{C_{\psi}}   \int_{\mathbb{R}^{\times}} \int_{\mathbb{R}^{d}} \left[\int_{\mathbb{R}^{d}} \overline{\psi_{r,s}}(x) d\mu(x)\right] {\langle f, \psi_{r,s} \rangle}_{L^{2}(\mathbb{R}^{d})} ds \dfrac{dr }{|r|^{d+1}}\\
            &=   \int_{\mathbb{R}^{d}} \left[ \dfrac{1}{C_{\psi}}  \int_{\mathbb{R}^{\times}} \int_{\mathbb{R}^{d}} \overline{\psi_{r,s}}(x) {\langle f, \psi_{r,s} \rangle}_{L^{2}(\mathbb{R}^{d})} ds \dfrac{dr}{|r|^{d+1}} \right] d\mu(x) \quad  (\mbox{Using Fubini-Tonelli theorem})\\
            &= \int_{\mathbb{R}^{d}} f(x) d\mu(x) \quad (\mbox{Using Equation \ref{formula}}).
            \end{align*}  This completes the proof.
\end{proof}

\begin{proof}[\textbf{Proof of Theorem \ref{main:thm1}}]
  $(\impliedby)$ Let $g: \mathbb{R}^{\times} \times \mathbb{R}^{d} \rightarrow \mathbb{C}$ be the measurable function that satisfies the conditions $(a)$ and $(b).$  We need to construct a complex Radon measure $\mu$ such that $g=\mathcal{W}_{\psi} \mu$. \\
  
  Define $T: C_{c}^{d+1}(\mathbb{R}^{d}) \rightarrow \mathbb{C}$ by
\begin{equation}\label{abc}
T(f)=\dfrac{1}{C_{\psi}}\int_{\mathbb{R}^{\times}} \int_{\mathbb{R}^{d}} g(r,s) \mathcal{W}_{\psi}  f(r,s) ds \frac{d r}{|r|^{d+1}}. 
\end{equation} 
The linearity of $\mathcal{W}_{\psi}$ gives the linearity of $T$. Using condition $(b)$,
$$
|T(f)|=\left|\dfrac{1}{C_{\psi}}\int_{\mathbb{R}^{\times}} \int_{\mathbb{R}^{d}} g(r,s)  \mathcal{W}_{\psi} f(r, s) ds \frac{d r}{|r|^{d+1}}\right| \leq \dfrac{C}{C_{\psi}}\| f \|_{\infty}. 
$$ By taking supremum over the unit ball, we have $\|T\|_{\infty} \leq  \dfrac{C}{C_{\psi}} .$ Thus, $T$ is a bounded linear functional on $C_{c}^{d+1}(\mathbb{R}^{d})$. 
The map $T$ can be extended uniquely to $C_{0}(\mathbb{R}^{d})$ as a bounded linear functional $\tilde{T}$ using the density of $C_{c}^{d+1}(\mathbb{R}^{d})$ in $C_{0}(\mathbb{R}^{d})$ in the supremum norm. Note that $\tilde{T}(f)=T(f)$ for all $f \in C_{c}^{d+1} (\mathbb{R}^{d})$. 

By applying Riesz representation theorem to $\tilde{T}$, there exists a unique $\mu\in \mathcal{M}(\mathbb{R}^{d})$ such that
\begin{equation}\label{functionalrep}
\tilde{T}(f)=\int_{\mathbb{R}^{d}} f(x) d \mu(x) 
\end{equation} for all $f\in C_{0}(\mathbb{R}^{d})$.

Note that since $\psi \in C_{c}^{d+1}(\mathbb{R}^{d})$, then $\psi_{a, b} \in C_{c}^{d+1}(\mathbb{R}^{d})$ for all $(a, b) \in \mathbb{R}^{\times} \times \mathbb{R}^{d}$. Substitute $f=\overline{\psi_{a, b}}$ in Equation \ref{functionalrep}, then
\begin{equation*}
\tilde{T}\left(\overline{\psi_{a, b}}\right)=T\left(\overline{\psi_{a, b}}\right)=\int_{\mathbb{R}^{d}} \overline{\psi_{a, b}}(x) d \mu(x)=\mathcal{W}_{\psi} \mu(a, b).
\end{equation*} Next, take $f= \overline{\psi_{a,b}}$ in Equation \ref{abc}, we get 
\begin{align*}
T\left(\overline{\psi_{a, b}}\right) & =\dfrac{1}{C_{\psi}}\int_{\mathbb{R}^{\times}} \int_{\mathbb{R}^{d}} g(r, s) \mathcal{W}_{\psi} \overline{\psi_{a, b}} (r, s) ds \frac{ d r}{|r|^{d+1}} \\
& =\dfrac{1}{C_{\psi}} \int_{\mathbb{R}^{\times}} \int_{\mathbb{R}^{d}} g(r, s)\left\langle \overline{\psi_{a, b}}, \psi_{r, s}\right\rangle_{L^{2}(\mathbb{R}^{d})}ds \frac{ d r}{|r|^{d+1}} \\
& =g(a, b).
\end{align*} The last equality follows from condition $(a)$. Therefore, $\mathcal{W}_{\psi} \mu =g$. This proves the sufficiency. \\

\noindent $(\implies)$  Suppose that $g=\mathcal{W}_{\psi} \mu$ for some $\mu \in \mathcal{M}({\mathbb{R}^{d}})$. We have to show that $g$ satisfies conditions $(a)$ and $(b)$. Choose $f= \overline{\psi_{a,b}}$ in Proposition \ref{measure re} , to get condition $(a)$,   
\begin{equation*}
  \dfrac{1}{C_{\psi}}   \int_{\mathbb{R}^{\times}} \int_{\mathbb{R}^{d}} \mathcal{W}_{\psi} \mu(r,s) {\langle \overline{\psi_{a,b}}, \psi_{r,s} \rangle}_{L^{2}(\mathbb{R}^{d})} ds\dfrac{ dr }{|r|^{d+1}} = \int_{\mathbb{R}^{d}} \overline{\psi_{a,b}} (x) d\mu(x)= \mathcal{W}_{\psi}\mu(a,b)= g(a,b)
\end{equation*} for all $(a,b)\in \mathbb{R}^{\times} \times \mathbb{R}^{d}$. Again, by Proposition \ref{measure re}, for all $f\in C_{c}^{d+1}(\mathbb{R}^{d})$
\begin{align*}
    \left|\int_{\mathbb{R}^{\times}} \int_{\mathbb{R}^{d}} g(a, b) \mathcal{W}_{\psi} f(a, b) db \frac{ d a}{|a|^{d+1}}\right|&= \left|\int_{\mathbb{R}^{\times}} \int_{\mathbb{R}^{d}} g(a, b) {\left\langle f, \psi_{a,b}\right\rangle}_{L^{2}(\mathbb{R}^{d})} db \frac{d a}{|a|^{d+1}}\right|\\
    &= C_{\psi}\left|\int_{\mathbb{R}^{d}} f(x) d\mu(x)\right|\\
    &  \leq C_{\psi}\int_{\mathbb{R}^{d}} \left|f(x)\right| |d\mu(x)|\\
    &\leq C_{\psi}\|f\|_{\infty} \|\mu\|_{TV}.
\end{align*}  Choosing $C= C_{\psi}\|\mu\|_{TV}>0$ proves condition $(b)$. This completes the proof.
\end{proof}
The two vanishing moment conditions imposed on $\psi$ is necessary for the conclusion of integrability in Lemma \ref{main:lemma1}.  We provide a counterexample in dimension $1$, i.e., $\mathbb{R}$ with the Lebesgue measure. More precisely, we show that if either of the conditions 
\begin{equation*}
    \int_{\mathbb{R}} \psi(y) dy=0 , \quad \int_{\mathbb{R}} y \psi(y) dy =0
\end{equation*}
are violated, then there exists $f\in C_c^2(\mathbb R)$ such that
$$
\int_{0<a\leq 1}\int_{\mathbb R}
|a|^{-1/2}|W_\psi f(a,b)|\,db\,\frac{da}{a^2}
$$ is divergent.
\begin{example}\label{eg1}
Let
$$
\varphi(x)=
\begin{cases}
C.e^{-\frac{1}{1-x^2}},& |x|<1,\\
0,& |x|\geq 1.
\end{cases}
$$
Then $\varphi\in C_c^\infty(\mathbb R)$ and choose the constant $C$ such that 
$$
\int_{\mathbb R}\varphi(y)\,dy=1 >0.$$
\begin{enumerate}
\item [(a)]
 Take $\psi=\varphi$. Thus
$$
\int_{\mathbb R}\psi(y)\,dy=1\neq0.
$$ Choose a nonempty compact interval $I$ and
$f\in C_c^2(\mathbb R)$ such that
$$
f(b)\geq c >0,\qquad \forall b\in I.
$$
For $a>0$, Taylor's theorem gives
$$
f(b+ay)
=
f(b)+ayf'(b)
+\frac{a^2y^2}{2}f''(b+\theta ay),
$$
where $\theta \in(0,1)$. Hence, for $b\in I$ and $a>0$,
$$
W_\psi f(a,b)
=
a^{\frac{1}{2}}
\int_{\mathbb R}f(b+ay)\psi(y)\,dy,
$$
and therefore

\begin{align*}
 W_\psi f(a,b)&= a^{\frac{1}{2}}\int_{\mathbb{R}} \left[f(b)+ayf'(b)
+\frac{a^2y^2}{2}f''(b+\theta ay)\right]\psi(y)dy\\
&=
a^{\frac{1}{2}}f(b)\int_{\mathbb{R}}\psi(y)dy
+a^{\frac{3}{2}}f'(b)\int_{\mathbb R}y\psi(y)\,dy
+\frac{a^{\frac{5}{2}}}{2}
\int_{\mathbb R}y^2\psi(y)
f''(b+\theta ay)\,dy.
   \end{align*}
Since $f'$ and $f''$ are bounded and $\psi$ is compactly supported, then for $ 0<a\leq1$ we get
\begin{align*}
\left|
a^{\frac{3}{2}}f'(b)\int_{\mathbb R}y\psi(y)\,dy
+\frac{a^{\frac{5}{2}}}{2}
\int_{\mathbb R}y^2\psi(y)
f''(b+\theta ay)\,dy
\right|
&= a^{\frac{3}{2}} \left|f'(b) \int_{\mathbb{R}} y\psi(y) dy + \dfrac{a}{2} \int_{\mathbb R}y^2\psi(y)
f''(b+\theta ay)\,dy \right|\\
&\leq C_1 a^{\frac{3}{2}}
\end{align*}
for appropriate constant  $C_1 >0$. 
Using reverse triangle inequality, for all $b\in I$ and $0<a\leq 1$ we have 
\begin{align*}
    \left|\mathcal{W}_{\psi} f(a,b)\right|&\geq \left|a^{\frac{1}{2}}f(b)\int_{\mathbb{R}}\psi(y)dy\right|- \left|a^{\frac{3}{2}}f'(b)\int_{\mathbb R}y\psi(y)\,dy
+\frac{a^{\frac{5}{2}}}{2}
\int_{\mathbb R}y^2\psi(y)
f''(b+\theta ay)\,dy\right|\\
&\geq a^{\frac{1}{2}} c - C_{1} a^{\frac{3}{2}}\\
&=a^{\frac{1}{2}} \left(c- C_{1} a\right).
\end{align*}
Consequently, choosing $0<a_0\leq \dfrac{c}{2 C_1}$ we get 
\[
|W_\psi f(a,b)|
\geq \frac{c}{2}\,a^{\frac{1}{2}},
\qquad \forall b\in I,\quad 0<a\leq a_0.
\]
It follows that
\[
\begin{aligned}
\int_{0<a\leq a_0}\int_{\mathbb R}
a^{-\frac{1}{2}}|W_\psi f(a,b)|\,db\,\frac{da}{a^2}
&\geq
\int_0^{a_0}\int_I
a^{-\frac{1}{2}}\frac{c}{2}a^{\frac{1}{2}}\,db\,\frac{da}{a^2}\\
&=
\frac{ c m(I)}{2} 
\int_0^{a_0}\frac{da}{a^2} \quad (\mbox{Note that $m(I)$ is the Lebesgue measure of $I$ })\\
&=\infty.
\end{aligned}
\]
Thus the condition
\[
\int_{\mathbb R}\psi(y)\,dy=0
\]
is necessary.
\item [(b)]
We next show that the first vanishing moment is also necessary, even when the zeroth moment vanishes.
With the same function $\varphi$ as above, put
\[
\psi=\varphi'.
\]
Then $\psi\in C_c^\infty(\mathbb R)$, and, since $\varphi(\pm1)=0$,
\[
\int_{\mathbb R}\psi(y)\,dy
=
\int_{-1}^{1}\varphi'(y)\,dy
=
\varphi(1)-\varphi(-1)
=0.
\]
On the other hand, integration by parts gives
\[
\int_{\mathbb R}y\psi(y)\,dy
=
\int_{-1}^{1}y\varphi'(y)\,dy
=
\big[y\varphi(y)\big]_{-1}^{1}
-\int_{-1}^{1}\varphi(y)\,dy
=-1\neq0.
\]
Choose $f\in C_c^2(\mathbb R)$ and a compact interval
$J$ such that
\[
|f'(b)|\geq c>0,\qquad \forall b\in J.
\]
 Taylor's theorem gives
 \[
f(b+ay)
=
f(b)+ayf'(b)
+\frac{a^2y^2}{2}f''(b+\theta ay),
\]
and consequently
\[
\begin{aligned}
W_\psi f(a,b)
&=
a^{\frac{1}{2}}\int_{\mathbb R}
f(b+ay)\psi(y)\,dy\\
&=
a^{\frac{1}{2}}f(b)\int_{\mathbb R}\psi(y)\,dy
+a^{\frac{3}{2}}f'(b)\int_{\mathbb R}y\psi(y)\,dy\\
&\qquad
+\frac{a^{\frac{5}{2}}}{2}
\int_{\mathbb R}y^2\psi(y)
f''(b+\theta ay)\,dy.
\end{aligned}
\]
Since $\displaystyle \int_{\mathbb R}\psi=0$ and
$\displaystyle \int_{\mathbb R}y\psi=-1$,
\[
W_\psi f(a,b)
=
-a^{\frac{3}{2}}f'(b)
+
R(a,b),
\]
where
\[
R(a,b)
=
\frac{a^{\frac{5}{2}}}{2}
\int_{\mathbb R}y^2\psi(y)
f''(b+\theta ay)\,dy.
\]
Moreover,
\[
|R(a,b)|
\leq
\frac{a^{\frac{5}{2}}}{2}
\|f''\|_\infty
\int_{\mathbb R}y^2|\psi(y)|\,dy.
\]
Set
\[
C_0=
\frac12\|f''\|_\infty
\int_{\mathbb R}y^2|\psi(y)|\,dy.
\]
Then
\[
|R(a,b)|\leq C_0a^{\frac{5}{2}}.
\]
Since $|f'(b)|\geq c$ on $J$, we obtain using reverse triangle inequality that
\[
|W_\psi f(a,b)|
\geq
c\,a^{\frac{3}{2}}-C_0a^{\frac{5}{2}}
=
a^{\frac{3}{2}}(c-C_0a).
\]
Choose $0<a_1\leq1$ such that
\[
C_0a_1\leq\frac{c}{2}.
\]
Then
\[
|W_\psi f(a,b)|
\geq
\frac{c}{2}a^{\frac{3}{2}},
\qquad \forall b\in J,\quad 0<a\leq a_1.
\]
Therefore
\[
\begin{aligned}
\int_{0<a\leq a_1}\int_{\mathbb R}
a^{-\frac{1}{2}}|W_\psi f(a,b)|\,db\,\frac{da}{a^2}
&\geq
\int_0^{a_1}\int_J
a^{-\frac{1}{2}}\frac{c}{2}a^{\frac{3}{2}}\,db\,\frac{da}{a^2}\\
&=
\frac{c m(J) }{2}
\int_0^{a_1}\frac{da}{a} \quad (\mbox{Note that $m(J)$ is the Lebesgue measure of $J$ })\\
&=\infty.
\end{aligned}
\]
Hence
\[
\int_{\mathbb R}y\psi(y)\,dy=0
\]
is also necessary.
\end{enumerate}
\end{example}

\subsection{Consequences of the Main Result}\label{conseq} This section discusses some consequences of Theorem \ref{main:thm1} and provide examples to illustrate them.

The following result gives a concrete realization of $\mathcal{M}(\mathbb{R}^{d})$ via the wavelet transform: 
\begin{theorem}\label{main;thm 2}
    Let $\psi$ denote a wavelet satisfying the hypothesis of Theorem \ref{main:thm1}. Let $\mathcal{F}_{\psi}$ denote the set of all measurable functions $ g:\mathbb{R}^{\times} \times \mathbb{R}^{d}\to \mathbb{C}$ such that it satisfies the conditions $(a)$ and $(b)$ of Theorem \ref{main:thm1}. For $g\in \mathcal{F}_{\psi}$, define 
        \begin{equation}
            \|g\|_{\psi}:= \sup \left\{\dfrac{1}{C_{\psi}}\left| \int_{\mathbb{R}^{\times}} \int_{\mathbb{R}^{d}} g(a,b) \mathcal{W}_{\psi} f(a,b) db \dfrac{da}{|a|^{d+1}}\right|: f\in C_{c}^{d+1}(\mathbb{R}^{d}), \quad \|f\|_{\infty}\leq 1\right\}.
        \end{equation} Then $\left(\mathcal{F}_{\psi}, \|\cdot \|_{\psi}\right)$ is Banach space isometrically isomorphic to $\mathcal{M}(\mathbb{R}^{d}).$
\end{theorem}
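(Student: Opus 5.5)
The plan is to show that the map $\Phi:\mathcal{M}(\mathbb{R}^{d})\to \mathcal{F}_{\psi}$, $\Phi(\mu)=\mathcal{W}_{\psi}\mu$, is a linear bijection with $\|\Phi(\mu)\|_{\psi}=\|\mu\|_{TV}$. Completeness of $(\mathcal{F}_{\psi},\|\cdot\|_{\psi})$ then comes for free: it is the isometric image of the Banach space $(\mathcal{M}(\mathbb{R}^{d}),\|\cdot\|_{TV})$. Moreover, once $\Phi$ is a linear isometry onto $\mathcal{F}_{\psi}$, the functional $\|\cdot\|_{\psi}$ is automatically a norm on $\mathcal{F}_{\psi}$. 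In particular, $\|g\|_{\psi}=0$ forces $g=0$, and we do not need to check this directly.

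Well-definedness and surjectivity are exactly Theorem \ref{main:thm1}. The direction $(\implies)$ shows that $\mathcal{W}_{\psi}\mu\in\mathcal{F}_{\psi}$, and $(\impliedby)$ shows that every $g\in\mathcal{F}_{\psi}$ equals $\mathcal{W}_{\psi}\mu$ for some $\mu$. We should also note that $\mathcal{F}_{\psi}$ is a vector space, since conditions (a) and (b) are preserved under linear combinations. Linearity of $\Phi$ is immediate from Definition \ref{measure}.

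The key computation is the isometry. By Proposition \ref{measure re}, for every $f\in C_{c}^{d+1}(\mathbb{R}^{d})$,
\[
\frac{1}{C_{\psi}}\int_{\mathbb{R}^{\times}}\int_{\mathbb{R}^{d}}\mathcal{W}_{\psi}\mu(a,b)\,\mathcal{W}_{\psi}f(a,b)\,db\,\frac{da}{|a|^{d+1}}=\int_{\mathbb{R}^{d}}f\,d\mu .
\]
Hence
\[
\|\mathcal{W}_{\psi}\mu\|_{\psi}=\sup\left\{\left|\int_{\mathbb{R}^{d}} f\,d\mu\right| : f\in C_{c}^{d+1}(\mathbb{R}^{d}),\ \|f\|_{\infty}\le 1\right\}.
\]
By the Riesz representation theorem, $\|\mu\|_{TV}$ equals the norm of the functional $f\mapsto\int f\,d\mu$ on $C_{0}(\mathbb{R}^{d})$. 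Since $C_{c}^{d+1}(\mathbb{R}^{d})$ is dense in $C_{0}(\mathbb{R}^{d})$ in the sup norm, the supremum over its unit ball equals the supremum over the unit ball of $C_{0}(\mathbb{R}^{d})$.

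The one point needing care is this density step. We must make sure the approximation stays inside the unit ball. If $f\in C_0$ with $\|f\|_\infty\le1$ and $f_n\to f$ uniformly with $f_n\in C_c^{d+1}$, we replace $f_n$ by $f_n/\max(1,\|f_n\|_\infty)$. This keeps $f_n$ in the unit ball and still converges to $f$, and $|\int f_n\,d\mu|\to|\int f\,d\mu|$. This gives $\|\mathcal{W}_{\psi}\mu\|_{\psi}=\|\mu\|_{TV}$.

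Injectivity follows from the isometry: if $\mathcal{W}_{\psi}\mu=0$ then $\|\mu\|_{TV}=0$. Thus $\Phi$ is an isometric isomorphism, and $\mathcal{F}_{\psi}$ is a Banach space.
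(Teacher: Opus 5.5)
Your proof is correct and follows the paper's argument. You take surjectivity from Theorem \ref{main:thm1} and use Proposition \ref{measure re} to reduce $\|\mathcal{W}_{\psi}\mu\|_{\psi}$ to the supremum of $|\int f\,d\mu|$ over the unit ball of $C_{c}^{d+1}(\mathbb{R}^{d})$. You then conclude with the Riesz representation theorem and a renormalized density argument, where your $f_n/\max(1,\|f_n\|_\infty)$ plays the role of the paper's $h/(1+\epsilon)$, and completeness is inherited from $\mathcal{M}(\mathbb{R}^{d})$.

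The only difference is organizational. You get definiteness of $\|\cdot\|_{\psi}$ and injectivity of $\mathcal{W}_{\psi}$ as corollaries of the isometry. The paper checks them separately: definiteness by testing condition (a) against a normalized multiple of $\overline{\psi_{r,s}}$, and injectivity by uniqueness in the Riesz theorem. Your route is slightly more economical.
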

\begin{proof}
    It is trivial to verify that $\mathcal{F}_{\psi}$ is a vector space. The well-definedness of $\|\cdot \|_{\psi}$ follows from condition $(b)$ of Theorem \ref{main:thm1}. The triangle inequality, scalar compatibility and positivity follows easily. \\

    Assume $\|g\|_{\psi}=0$. Then 
\begin{equation}\label{new}
     \int_{\mathbb{R}^{\times}} \int_{\mathbb{R}^{d}} g(a,b) \mathcal{W}_{\psi} f(a,b) db \dfrac{ da}{|a|^{d+1}}=0
\end{equation} for all $f\in C_{c}^{d+1}(\mathbb{R}^{d})$ with $\|f\|_{\infty}\leq 1$. Fix $(r,s)\in \mathbb{R}^{\times} \times \mathbb{R}^{d}.$ Define $\displaystyle h= \displaystyle \dfrac{|r|^{-\frac{d}{2}}}{\|\psi\|_{\infty}} \overline{\psi_{r,s}}$. Then $\|h\|_{\infty}\leq 1$ and hence $h$ satisfies Equation \ref{new}. This together with condition $(a)$ of Theorem \ref{main:thm1} yields that 
\begin{align*}
    g(r,s)= \dfrac{1}{C_{\psi}}\int_{\mathbb{R}^{\times}} \int_{\mathbb{R}^{d}} g(a,b) {\langle \overline{\psi_{r,s}}, \psi_{a,b} \rangle}_{L^{2}(\mathbb{R}^{d})} db \dfrac{ da}{|a|^{d+1}}=0.
\end{align*} This shows that $g=0$. This shows that $\|\cdot \|_{\psi}$ is a norm on $\mathcal{F}_{\psi}.$ It follows from Theorem \ref{main:thm1} that 
        \begin{equation*}
            \mathcal{W}_{\psi} \left(\mathcal{M}(\mathbb{R}^{d})\right)= \mathcal{F}_{\psi}
        \end{equation*}  for any $\psi$ satisfying the regularity assumptions given in Theorem \ref{main:thm1}. Thus, $\mathcal{W}_{\psi}$ is surjective. Next, we prove that the map $\mathcal{W}_{\psi} : \mathcal{M}(\mathbb{R}^{d})\to \mathcal{F}_{\psi}$ is injective. Note that $\mathcal{W}_{\psi}$ is linear. Suppose $\mathcal{W}_{\psi} \mu=0$. Then from Equation \ref{measurereconstruction}, we have the bounded linear functional
        \begin{equation*}
           T(f)= \int_{\mathbb{R}^{d}} f(x) d\mu(x)=0
        \end{equation*} for all $f\in C_{c}^{d+1}(\mathbb{R}^{d})$. The density of $C_{c}^{d+1}(\mathbb{R}^{d})$ in $C_{0}(\mathbb{R}^{d})$ implies that $T(f)=0$ for all $f\in C_{0}(\mathbb{R}^{d})$. The uniqueness in the Riesz representation theorem implies that $\mu=0$.  Therefore, $\mathcal{W}_{\psi}: \mathcal{M}(\mathbb{R}^{d})\to \mathcal{F}_{\psi}$ is injective. Thus, it is a linear isomorphism from $\mathcal{M}(\mathbb{R}^{d})$ to $\mathcal{F}_{\psi}$. We claim that the map $\mathcal{W}_{\psi}: \mathcal{M}(\mathbb{R}^{d})\to \mathcal{F}_{\psi}$ is a linear isometry. By Proposition \ref{measure re}, we have for $\mu\in \mathcal{M}(\mathbb{R}^{d})$, 
        \begin{align*}
            \|\mathcal{W}_{\psi} \mu\|_{\psi}&=\sup \left\{\dfrac{1}{C_{\psi}}\left| \int_{\mathbb{R}^{\times}} \int_{\mathbb{R}^{d}} \mathcal{W}_{\psi}\mu(a,b) \mathcal{W}_{\psi} f(a,b) db \dfrac{da}{|a|^{d+1}}\right|: f\in C_{c}^{d+1}(\mathbb{R}^{d}), \quad \|f\|_{\infty}\leq 1\right\}\\
            &= \sup \left\{\left| \int_{\mathbb{R}^{d}} f(x) d\mu(x)\right|: f\in C_{c}^{d+1}(\mathbb{R}^{d}), \quad \|f\|_{\infty}\leq 1\right\}.
        \end{align*} It follows from Riesz representation theorem  (see \cite[Theorem 7.17]{folland1999real}) that 
        \begin{equation*}
            \|\mu\|_{TV}= \sup\left\{ \left| \int_{\mathbb{R}^{d}} f(x) d\mu(x)\right|: f\in C_{0}(\mathbb{R}^{d}), \quad \|f\|_{\infty} \leq 1\right\}.
        \end{equation*} Since $C_{c}^{d+1}(\mathbb{R}^{d})\subset C_{0}(\mathbb{R}^{d})$, then $\|\mathcal{W}_{\psi} \mu\|_{\psi} \leq \|\mu\|_{TV}$. 
        Next, we prove the reverse inequality. Let $\epsilon > 0$. There exists $f\in C_{0}(\mathbb{R}^{d})$ with $\|f\|_{\infty}\leq 1$ such that 
        \begin{equation}
            \|\mu\|_{TV}- \epsilon < \left|\int_{\mathbb{R}^{d}} f(x) d\mu(x)\right|. \label{inequality}
        \end{equation} The density of $C_{c}^{d+1}(\mathbb{R}^{d})$ in $C_{0}(\mathbb{R}^{d})$ implies that there exists $h\in C_{c}^{d+1}(\mathbb{R}^{d})$ such that 
        \begin{equation*}
            \|f-h\|_{\infty} < \epsilon.
        \end{equation*} As a consequence, 
        \begin{equation*}
            \|h\|_{\infty}\leq \| h-f\|_{\infty}+ \|f\|_{\infty}\leq 1+ \epsilon.
        \end{equation*} Define $\displaystyle g = \dfrac{h}{1+\epsilon}$. Then $g\in C_{c}^{d+1}(\mathbb{R}^{d})$ and $\|g\|_{\infty}\leq 1$. Now 
        \begin{align*}
            \|f-g\|_{\infty}&= \left\| f- \dfrac{h}{1+\epsilon}\right\|_{\infty}\\
            &\leq  \left\| f- \dfrac{f}{1+\epsilon}\right\|_{\infty}+ \left\| \dfrac{f}{1+ \epsilon}- \dfrac{h}{1+\epsilon}\right\|_{\infty}\\
            &= \|f\|_{\infty} \left(1 - \dfrac{1}{1+ \epsilon}\right) + \dfrac{1}{1+\epsilon} \|f-h\|_{\infty}\\
            &\leq \dfrac{2\epsilon}{1+ \epsilon}.
        \end{align*} Therefore, 
        \begin{align*}
            \left|\int_{\mathbb{R}^{d}} f(x) d\mu(x)\right|
            &\leq \left|\int_{\mathbb{R}^{d}} [f(x)-g(x)] d\mu(x)\right|+ \left|\int_{\mathbb{R}^{d}} g(x) d\mu(x)\right|\\
            &\leq \dfrac{2 \epsilon}{ 1+ \epsilon}\|\mu\|_{TV}+ \|\mathcal{W}_{\psi} \mu\|_{\psi}.
        \end{align*} Thus simplifying using the Equation \ref{inequality}, we get
        \begin{equation*}
            \|\mu\|_{TV} <   \left(\dfrac{1+\epsilon}{ 1- \epsilon}\right)\epsilon + \left(\dfrac{1+\epsilon}{ 1- \epsilon}\right)\|\mathcal{W}_{\psi} \mu\|_{\psi}.
        \end{equation*} Letting $\epsilon\to 0$, we get $\|\mu\|_{TV} \leq \|\mathcal{W}_{\psi} \mu\|_{\psi}.$ This proves that $\|\mathcal{W}_{\psi} \mu\|_{\psi}= \|\mu\|_{TV}.$ We have shown that $\mathcal{W}_{\psi}$ is an isometric linear isomorphism between $\mathcal{M}(\mathbb{R}^{d})$ and $\mathcal{F}_{\psi}$. Since $\left(\mathcal{M}(\mathbb{R}^{d}), \|\cdot\|_{TV}\right)$ is a Banach space, then $\left(\mathcal{F}_{\psi},\| \cdot \|_{\psi}\right)$ is also a Banach space.  This completes the proof.
\end{proof}
We note an immediate consequence of Theorem \ref{main;thm 2}. 
\begin{corollary}
    Let $\mu, \nu\in \mathcal{M}(\mathbb{R}^{d})$. Define 
        \begin{equation}
            \mathcal{W}_{\psi} \mu \odot \mathcal{W}_{\psi} \nu:= \mathcal{W}_{\psi} \left( \mu * \nu\right).
        \end{equation} Then $(\mathcal{F}_{\psi},\|\cdot \|_{\psi}, \odot)$ is a commutative unital Banach algebra with unit $\mathcal{W}_{\psi}\delta_{\mathrm{0}}$.
\end{corollary}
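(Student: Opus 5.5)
The plan is to transport the Banach algebra structure of $(\mathcal{M}(\mathbb{R}^{d}), *)$ to $\mathcal{F}_{\psi}$ through the isometric linear isomorphism $\mathcal{W}_{\psi}: \mathcal{M}(\mathbb{R}^{d})\to \mathcal{F}_{\psi}$ of Theorem \ref{main;thm 2}. No new analysis is needed; the argument is structural.

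\textbf{Well-definedness.} I first check that $\odot$ is well defined on all of $\mathcal{F}_{\psi}\times \mathcal{F}_{\psi}$. By Theorem \ref{main;thm 2}, every $g\in \mathcal{F}_{\psi}$ equals $\mathcal{W}_{\psi}\mu$ for a \emph{unique} $\mu\in\mathcal{M}(\mathbb{R}^{d})$: surjectivity comes from Theorem \ref{main:thm1} and injectivity was proved via Proposition \ref{measure re} together with uniqueness in the Riesz representation theorem. So the recipe $g_1\odot g_2 := \mathcal{W}_{\psi}(\mathcal{W}_{\psi}^{-1}g_1 * \mathcal{W}_{\psi}^{-1}g_2)$ does not depend on any choices. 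Since $\mu*\nu\in\mathcal{M}(\mathbb{R}^{d})$, its image lies in $\mathcal{F}_{\psi}$ again by Theorem \ref{main:thm1}. This step is the only point where some care is needed, and it is settled entirely by the bijectivity established earlier.

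\textbf{Algebra axioms.} With $\Phi=\mathcal{W}_{\psi}$, we have $\Phi(\mu)\odot\Phi(\nu)=\Phi(\mu*\nu)$ by definition. Bilinearity of $\odot$ follows from linearity of $\Phi$ and $\Phi^{-1}$ together with bilinearity of convolution. Associativity and commutativity pull back directly from $(\mu*\nu)*\lambda=\mu*(\nu*\lambda)$ and $\mu*\nu=\nu*\mu$. For the unit, $\delta_{0}*\mu=\mu=\mu*\delta_{0}$ gives $\Phi(\delta_0)\odot\Phi(\mu)=\Phi(\mu)$, so $\Phi(\delta_0)=\mathcal{W}_{\psi}\delta_{0}$ is the unit. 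Explicitly, $\mathcal{W}_{\psi}\delta_0(a,b)=|a|^{-d/2}\overline{\psi(-b/a)}$.

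\textbf{Banach algebra norm estimates.} Using the isometry $\|\Phi\mu\|_{\psi}=\|\mu\|_{TV}$ from Theorem \ref{main;thm 2}, I compute
\[
\|\Phi\mu\odot\Phi\nu\|_{\psi}=\|\mu*\nu\|_{TV}\le\|\mu\|_{TV}\|\nu\|_{TV}=\|\Phi\mu\|_{\psi}\|\Phi\nu\|_{\psi}.
\]
The unit has norm $\|\mathcal{W}_{\psi}\delta_0\|_{\psi}=\|\delta_0\|_{TV}=1$. Completeness of $(\mathcal{F}_{\psi},\|\cdot\|_{\psi})$ was already obtained in Theorem \ref{main;thm 2}.

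Together these show that $(\mathcal{F}_{\psi},\|\cdot\|_{\psi},\odot)$ is a commutative unital Banach algebra, and that $\mathcal{W}_{\psi}$ is an isometric algebra isomorphism from $(\mathcal{M}(\mathbb{R}^{d}),*)$ onto it.
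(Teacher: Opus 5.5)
Your proposal is correct and takes the same route as the paper: the paper also transports the commutative unital Banach algebra structure of $(\mathcal{M}(\mathbb{R}^{d}),*)$ to $\mathcal{F}_{\psi}$ through the isometric linear isomorphism $\mathcal{W}_{\psi}$ of Theorem \ref{main;thm 2}. You additionally spell out what the paper leaves implicit: well-definedness of $\odot$ via injectivity, submultiplicativity of the norm, the unit having norm one, and completeness.
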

\begin{proof}
    The proof is a direct application of Theorem \ref{main;thm 2}. The isometry property of $\mathcal{W}_{\psi}$ together with the Banach algebra structure of $\mathcal{M}(\mathbb{R}^{d})$ under convolution $*$ and unit $\delta_0$ proves that $\mathcal{F}_{\psi}$ is a commutative unital Banach algebra under the operation $\odot.$ This completes the proof.
\end{proof}

\begin{remark}
  \begin{enumerate}
      \item Let $\psi$ be a wavelet satisfying the hypothesis of Theorem \ref{main:thm1}. Then $\mathcal{W}_{\psi} \mu$ is continuous on $\mathbb{R}^{\times} \times \mathbb{R}^{d}.$ For, let $(a,b)\in \mathbb{R}^{\times}\times \mathbb{R}^{d}$ and $\left((a_n, b_n)\right)_{n\in \mathbb{N}} \subset \mathbb{R}^{\times} \times \mathbb{R}^{d}$ denotes a sequence converging to $(a,b)$ in the Euclidean topology. By bounded convergence theorem, $\mathcal{W}_{\psi} \mu(a_n, b_n)\to \mathcal{W}_{\psi} \mu(a, b)$ as $n\to \infty.$ That is, $\mathcal{F}_{\psi} \subset C\left(\mathbb{R}^{\times} \times \mathbb{R}^{d}\right).$
    \item Note that the continuous wavelet transform of $L^{2}-$functions are uniformly bounded. In the measure theoretic setting, the wavelet transform of a complex Radon measure may not be even  pointwise bounded. For example, let $\mu= \delta_{x_{\mathrm{0}}}$ for $x_{\mathrm{0}}\in \mathbb{R}$. Then 
        \begin{equation*}
            \mathcal{W}_{\psi} \mu(a,b) = \int_{\mathbb{R}} \dfrac{1}{\sqrt{|a|}} \overline{\psi\left(\dfrac{x-b}{a} \right)}d\delta_{x_{\mathrm{0}}}(x)= \dfrac{1}{\sqrt{|a|}} \overline{\psi\left(\dfrac{x_{\mathrm{0}}-b}{a} \right)}
        \end{equation*} for all points $(a,b)\in \mathbb{R}^{\times} \times \mathbb{R}.$ As $|a|\to 0$, $\mathcal{W}_{\psi} \mu(a,b)$ blows up, and hence is not bounded.
  \end{enumerate} 
\end{remark}

The following Proposition provides a strategy to reconstruct the measure associated to a function $g\in \mathcal{F}_{\psi}$ from the continuous wavelet transforms of functions:
\begin{proposition}\label{proplimit}
    Let $g\in \mathcal{F}_{\psi}$ and $\mu$ denote the associated complex Radon measure.  Then for any closed rectangle $\mathcal{R}\subset \mathbb{R}^d$, there exists a family $\left\{f_{\epsilon}\right\}_{\epsilon >0}\subset C_{c}^{d+1}(\mathbb{R}^{d})$ such that 
    \begin{equation}\label{limiting}
            \mu(\mathcal{R})= \lim_{\epsilon \to 0}  \dfrac{1}{C_{\psi}}\int_{\mathbb{R}^{\times }} \int_{\mathbb{R}^{d}} g(r,s) \mathcal{W}_{\psi} f_{\epsilon}(r,s) ds \dfrac{ dr}{|r|^{d+1}}
        \end{equation}.
\end{proposition}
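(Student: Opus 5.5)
The plan is to reduce the statement to Proposition \ref{measure re} combined with the dominated convergence theorem for the measure $\mu$. Since $g\in \mathcal{F}_{\psi}$, Theorem \ref{main;thm 2} (equivalently Theorem \ref{main:thm1}) gives a unique $\mu\in\mathcal{M}(\mathbb{R}^d)$ with $g=\mathcal{W}_{\psi}\mu$. Proposition \ref{measure re} then yields, for every $f\in C_c^{d+1}(\mathbb{R}^d)$,
\begin{equation*}
\frac{1}{C_{\psi}}\int_{\mathbb{R}^{\times}}\int_{\mathbb{R}^d} g(r,s)\,\mathcal{W}_{\psi}f(r,s)\,ds\,\frac{dr}{|r|^{d+1}}=\int_{\mathbb{R}^d} f\,d\mu .
\end{equation*}
It therefore suffices to construct $f_\epsilon\in C_c^{d+1}(\mathbb{R}^d)$ with $\int f_\epsilon\,d\mu\to\mu(\mathcal{R})$ as $\epsilon\to0$.

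Write $\mathcal{R}=\prod_{i=1}^d[\alpha_i,\beta_i]$. Fix a one-dimensional function $\eta\in C^{\infty}(\mathbb{R})$ with $0\le\eta\le1$, $\eta=1$ on $(-\infty,0]$ and $\eta=0$ on $[1,\infty)$; such an $\eta$ can be built from the bump $\varphi$ of Example \ref{eg1} by integration. Set
\begin{equation*}
\chi_{\epsilon,i}(t)=\eta\!\left(\frac{\alpha_i-t}{\epsilon}\right)\eta\!\left(\frac{t-\beta_i}{\epsilon}\right),\qquad f_\epsilon(x)=\prod_{i=1}^d\chi_{\epsilon,i}(x_i).
\end{equation*}
Then $f_\epsilon\in C_c^{\infty}(\mathbb{R}^d)\subset C_c^{d+1}(\mathbb{R}^d)$, $0\le f_\epsilon\le1$, $f_\epsilon=1$ on $\mathcal{R}$, and $\operatorname{supp}f_\epsilon\subset\prod_i[\alpha_i-\epsilon,\beta_i+\epsilon]$.

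The only delicate point is the limit. For $x\in\mathcal{R}$ we have $f_\epsilon(x)=1$ for all $\epsilon$. For $x\notin\mathcal{R}$, because $\mathcal{R}$ is closed, some coordinate satisfies $x_i<\alpha_i$ or $x_i>\beta_i$, and then $f_\epsilon(x)=0$ once $\epsilon<\operatorname{dist}(x_i,[\alpha_i,\beta_i])$. Hence $f_\epsilon\to\mathbf{1}_{\mathcal{R}}$ pointwise everywhere. This is exactly where closedness is needed; an open rectangle would require inner approximations instead.

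Since $|f_\epsilon|\le 1$ and $|\mu|$ is finite, the constant $1$ is $|\mu|$-integrable. Applying dominated convergence, for instance to $d\mu=h\,d|\mu|$ with $|h|=1$, along any sequence $\epsilon_n\to0$ gives $\int f_{\epsilon}\,d\mu\to\int\mathbf{1}_{\mathcal{R}}\,d\mu=\mu(\mathcal{R})$. Combined with the identity from Proposition \ref{measure re}, this is \eqref{limiting}.
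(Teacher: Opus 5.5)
Your proposal is correct and follows essentially the paper's route: obtain $\mu$ with $g=\mathcal{W}_{\psi}\mu$, use Proposition \ref{measure re} to rewrite the wavelet integral as $\int f_\epsilon\,d\mu$, and pass to the limit by dominated convergence using $f_\epsilon\to\chi_{\mathcal{R}}$ pointwise with $|f_\epsilon|\le 1$. The only difference is your cutoff, a product of smooth one-dimensional steps, where the paper uses the mollified indicator $\chi_{\mathcal{R}_\epsilon}*\Phi_\epsilon$. Yours is if anything cleaner: $f_\epsilon=1$ on $\mathcal{R}$ is immediate, and it avoids the paper's normalization $\Phi_\epsilon=\epsilon^{-1}\Phi(\cdot/\epsilon)$, which should be $\epsilon^{-d}\Phi(\cdot/\epsilon)$ when $d\ge 2$.
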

\begin{proof}
    Let $g\in \mathcal{F}_{\psi}$. By Theorem $\ref{main:thm1}$, there exists $\mu\in \mathcal{M}(\mathbb{R}^{d})$ such that $g= \mathcal{W}_{\psi}\mu$. We denote $\displaystyle \mathcal{R}= \prod_{i=1}^{d} [a_{i}, b_{i}]$, where $a_i, b_i \in \mathbb{R}$ with $a_{i} < b_{i}, i= 1,2, \cdots, d.$ Our objective is to compute $\mu(\mathcal{R})$ explicitly. Let $\phi: \mathbb{R}\to \mathbb{R}$ be the function given by 
\begin{equation*}
    \phi(x)= \begin{cases}
             e^{-\frac{1}{1-x^{2}}}, & |x|< 1\\
            0,& |x|\geq 1
        \end{cases}
\end{equation*} for all $x\in \mathbb{R}$. Note that $\phi\in C_{c}^{\infty}(\mathbb{R}).$ Define $\Phi: \mathbb{R}^{d} \to \mathbb{R}$ by 
\begin{equation*}
    \Phi(x_1, x_2, \cdots, x_{d})= C \prod_{i=1}^{d} \phi(x_i),
\end{equation*} where $(x_1, x_2, \cdots, x_d)\in \mathbb{R}^{d},$ and $C$ is a constant such that $\displaystyle \int_{\mathbb{R}^{d}} \Phi(x) dx=1.$ Clearly, $\Phi \in C_{c}^{\infty}(\mathbb{R}^{d}).$ For $\epsilon >0$, let $\displaystyle \Phi_{\epsilon}(x)=\displaystyle  \dfrac{1}{\epsilon}\Phi\left(\dfrac{x}{\epsilon}\right)$. Define $\mathcal{R}_{\epsilon}=\displaystyle  \prod_{i=1}^{d} \left[a_{i}-\epsilon, b_{i} + \epsilon \right]$ and let 
\begin{equation*}
    f_{\epsilon}(x)= \left(\chi_{\mathcal{R}_{\epsilon}} * \Phi_{\epsilon}\right)(x)
\end{equation*} for all $x\in \mathbb{R}^{d}.$ We claim that $f_{\epsilon}$ converges to $\chi_{\mathcal{R}}$ pointwise for all $x\in \mathbb{R}^{d}.$ Note that 
\begin{align*}
    f_{\epsilon}(x)& = \int_{\mathbb{R}^{d}} \chi_{\mathcal{R}_{\epsilon}}(x-y) \Phi_{\epsilon}(y) dy\\
    &= \int_{\mathbb{R}^{d}} \chi_{\mathcal{R}_{\epsilon}}(y) \Phi_{\epsilon}(x-y) dy\\
    &= \int_{\mathcal{R}_{\epsilon}} \Phi_{\epsilon}(x-y) dy.
\end{align*} 
Note that supp$(\phi)\subset [-1,1]$, and hence supp$(\Phi_{\epsilon})\subset \displaystyle \left[-\epsilon, \epsilon\right]^{d}$. Let $y=(y_1, y_2, \cdots, y_d)\in \mathbb{R}^{d}$.
It is clear that for a fixed $x=(x_1, x_2, \cdots, x_d)\in \mathbb{R}^{d}$,  $\Phi_{\epsilon}(x-y) $ is non-zero if and only if $x-y \in  \displaystyle \left[-\epsilon, \epsilon\right]^{d}$ if and only if $|x_{i}- y_{i}| < \epsilon$ for all $i=1,2,\cdots,d.$ 
Let $x=(x_1, x_2, \cdots, x_d)\in \mathcal{R}$. Denote $\mathcal{R}_{\epsilon}^{x}= \displaystyle \prod_{i=1}^{d} \left[x_{i}- \epsilon, x_{i} + \epsilon\right]$. Then $\Phi_{\epsilon}(x-y) $ is non-zero if and only if $y\in \mathcal{R}_{\epsilon}^{x} \subset \mathcal{R}_{\epsilon}.$ Therefore, $\displaystyle f_{\epsilon}(x)=\int_{\mathbb{R}^{d}} \Phi_{\epsilon}(x-y) dy=1$ for all $x\in  \mathcal{R}.$ Suppose that $x=(x_1,x_2,\cdots, x_d)\notin \mathcal{R}$. There exists an $i=1,2,\cdots,d$ such that $x_{i}\notin [a_{i}, b_{i}]$. In this case, we show that for sufficiently small $\epsilon>0$, if $y\in \mathcal{R}_{\epsilon}$, then $x- y\notin [-\epsilon,\epsilon]^{d}$. Assume $x_{i} < a_{i}$. That is, $a_i- x_i >0$. Choose a sufficiently small $\epsilon >0$ such that $0< \epsilon < \dfrac{a_i-x_i}{2}$. Note that $y=(y_1,y_2, \cdots, y_d)\in \mathcal{R}_{\epsilon}$ if and only if $y_{i}\in [a_i- \epsilon,b_i+\epsilon]$ for all $i=1,2,\cdots, d$. For $y_i \in [a_i- \epsilon,b_i+\epsilon] $, we have 
\begin{equation*}
    y_{i}- x_{i} \geq  a_{i}-\epsilon-x_{i} = (a_i- x_i)-\epsilon > (a_i- x_i)- \dfrac{a_i- x_i}{2}=\dfrac{a_i- x_i}{2} > \epsilon.
\end{equation*}  Similar argument can be used to show that the same thing happens when $x_{i}> b_{i}$ for some $i=1,2,\cdots, d$. This shows that $\Phi_{\epsilon}(x-y)=0$ for all $y\in \mathcal{R}_{\epsilon}$ which gives $f_{\epsilon}(x)=0$ for $x=(x_1,x_2,\cdots, x_d)\notin \mathcal{R}$. Thus, $\displaystyle \lim_{\epsilon\to 0} f_{\epsilon}(x)=0$ for all $x\notin \mathcal{R}.$ We have shown that $\displaystyle \lim_{\epsilon\to 0} f_{\epsilon}(x) = \chi_{\mathcal{R}}(x)$ pointwise for all $x\in \mathbb{R}^{d}.$ Observe that for all $x\in \mathbb{R}^{d}$
\begin{equation*}
    |f_{\epsilon}(x)|\leq \int_{\mathcal{R}_{\epsilon}} \Phi(y) dy \leq \int_{\mathbb{R}^{d}} \Phi(y) dy =1,
\end{equation*} and $1\in L^{1}(\mu).$ By dominated convergence theorem, 
\begin{equation*}
    \mu(\mathcal{R})= \int_{\mathbb{R}^{d}} \chi_{\mathcal{R}}(x) d\mu(x)=\int_{\mathbb{R}^{d}} \lim_{\epsilon\to 0} f_{\epsilon}(x) d\mu(x) = \lim_{\epsilon\to 0} \int_{\mathbb{R}^{d}} f_{\epsilon}(x) d\mu(x).
\end{equation*} Since $f_{\epsilon}\in C_{c}^{\infty}(\mathbb{R}^{d})$, then by Proposition \ref{measure re} we have 
\begin{equation*}
    \mu(\mathcal{R})=\lim_{\epsilon \to 0}  \dfrac{1}{C_{\psi}}\int_{\mathbb{R}^{\times }} \int_{\mathbb{R}^{d}} g(r,s) \mathcal{W}_{\psi} f_{\epsilon}(r,s) ds \dfrac{dr}{|r|^{d+1}}. 
\end{equation*} This completes the proof.
\end{proof}
The following example illustrates the use of Proposition  \ref{proplimit}.
\begin{example}\label{eg2}
    Choose a suitable $\psi$ satisfying the hypothesis of Theorem \ref{main:thm1}. Let $g(r,s)= \dfrac{1}{|r|^{\frac{1}{2}}} \overline{\psi \left(-\dfrac{s}{r}\right)}$ for all $(r,s)\in \mathbb{R}^{\times} \times \mathbb{R}.$ It is easy to see from the definition that $g= \mathcal{W}_{\psi} \delta_{\mathrm{0}}$ and hence belongs to $\mathcal{F}_{\psi}.$
Taking $\psi^1 = \psi$ and $\psi^2=\overline{\psi}$, in Lemma \ref{reconstruction lemma} it follows that 
    \begin{align*}
     \dfrac{1}{C_{\psi}}   \int_{\mathbb{R}^{\times}} \int_{\mathbb{R}} \overline{\psi_{r,s}(0)} \mathcal{W}_{\psi} \overline{\psi_{x,y}} (r,s) ds\dfrac{dr}{r^{2}}= \overline{\psi_{x,y}(0)}
    \end{align*} and 
    \begin{equation*}
      \dfrac{1}{C_{\psi}}  \left|\int_{\mathbb{R}^{\times}} \int_{\mathbb{R}} \overline{\psi_{r,s}(0)} \mathcal{W}_{\psi} f(r,s) ds \dfrac{ dr}{r^{2}} \right|= |f(0)|\leq 1\cdot \|f\|_{\infty}
    \end{equation*} for all $f\in C_{c}^{2}(\mathbb{R})$. Thus $g(r,s)= \dfrac{1}{|r|^{\frac{1}{2}}} \overline{\psi \left(-\dfrac{s}{r}\right)}$ satisfies the conditions $(a)$ and $(b)$ in Theorem \ref{main:thm1}, which gives that $g\in \mathcal{F}_{\psi}$ without using $g= \mathcal{W}_{\psi} \delta_{\mathrm{0}}$. Using Proposition \ref{proplimit}, we have for any $a,b \in \mathbb{R}$ with $a<b$,
    \begin{align*}
        \mu([a,b])= \lim_{\epsilon \to 0} \dfrac{1}{C_{\psi}}\int_{\mathbb{R}^{\times}} \int_{\mathbb{R}} \overline{\psi_{r,s}(0)} \mathcal{W}_{\psi} f_{\epsilon}(r,s) ds \dfrac{ dr}{r^{2}}= \lim_{\epsilon\to 0}f_{\epsilon}(0)= \delta_{\mathrm{0}}([a,b]).
    \end{align*} This follows from Proposition \ref{measure re}, and the fact that $f_{\epsilon}\to \chi_{[a,b]}$ as $\epsilon \to 0$.
\end{example}
 Next, we give a reconstruction formula of the measure $\mu$ on closed rectangles $\mathcal{R}$ in $\mathbb{R}^d$ directly from the wavelet transform of its characteristic function $\mathcal{W}_{\psi}\chi_{\mathcal{R}}$. Since the characteristic function of a closed rectangle $\chi_{\mathcal{R}}$ is not in $C_c^{d+1}(\mathbb{R}^d)$, we cannot directly use Proposition \ref{measure re}. Specifically, we address the question when can we take the limit $\epsilon\to 0$ inside the classical wavelet transform $\mathcal{W}_{\psi}f_{\epsilon}$ in Proposition \ref{proplimit}. For that we need the following preparatory lemmas and definitions.
\begin{definition}
    Let $g\in \mathcal{F}_{\psi}$. For each $r\in \mathbb{R}^{\times}$, define $\tilde{g}_{r}:\mathbb{R}^{d}\to \mathbb{C}$ by 
    \begin{equation*}
        \tilde{g}_{r} (s)= g(r,s), \quad \forall s\in \mathbb{R}^{d}.
    \end{equation*}
\end{definition}
\begin{lemma}
    Let $g\in \mathcal{F}_{\psi}$. {Then the function $\tilde{g}_{r} \in L^{1}(\mathbb{R}^{d})$ for every $r\in \mathbb{R}^{\times}$}.
\end{lemma}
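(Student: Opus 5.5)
By Theorem \ref{main:thm1}, every $g\in\mathcal{F}_{\psi}$ is of the form $g=\mathcal{W}_{\psi}\mu$ for a unique $\mu\in\mathcal{M}(\mathbb{R}^{d})$. So for fixed $r\in\mathbb{R}^{\times}$ we have
\begin{equation*}
\tilde{g}_{r}(s)=\mathcal{W}_{\psi}\mu(r,s)=|r|^{-\frac{d}{2}}\int_{\mathbb{R}^{d}}\overline{\psi\left(\frac{x-s}{r}\right)}\,d\mu(x)=(\mu*\psi_{r})(s),
\end{equation*}
where $\psi_{r}(x)=|r|^{-\frac{d}{2}}\psi(-x/r)$ as in Section \ref{continuousbackground}, and with the conjugation absorbed as in Definition \ref{measure}. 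The claim then reduces to the classical fact that the convolution of a finite measure with an $L^{1}$ function is again in $L^{1}$, with the norm bound $\|\mu*\psi_r\|_1\le\|\mu\|_{TV}\|\psi_r\|_1$.

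For the computation, I first note that $\tilde g_r$ is measurable. This follows because $(x,s)\mapsto\overline{\psi((x-s)/r)}$ is continuous, so Fubini applies; alternatively, one can use the continuity of $\mathcal{W}_\psi\mu$ noted in the Remark after the Corollary. Next I estimate the $L^1$ norm:
\begin{equation*}
\int_{\mathbb{R}^{d}}|\tilde{g}_{r}(s)|\,ds\le |r|^{-\frac{d}{2}}\int_{\mathbb{R}^{d}}\int_{\mathbb{R}^{d}}\left|\psi\left(\frac{x-s}{r}\right)\right|\,d|\mu|(x)\,ds .
\end{equation*}
By Tonelli's theorem with respect to the finite positive measure $|\mu|$ and Lebesgue measure, the order of integration can be swapped. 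The inner integral is
\begin{equation*}
\int_{\mathbb{R}^{d}}|\psi((x-s)/r)|\,ds=|r|^{d}\|\psi\|_{1},
\end{equation*}
via the substitution $y=(x-s)/r$, exactly as in Case 1 of Lemma \ref{main:lemma1}. This gives
\begin{equation*}
\|\tilde{g}_{r}\|_{1}\le |r|^{\frac{d}{2}}\|\psi\|_{1}\|\mu\|_{TV}=|r|^{\frac{d}{2}}\|\psi\|_{1}\|g\|_{\psi},
\end{equation*}
where the last equality uses Theorem \ref{main;thm 2}. The right-hand side is finite because $\psi\in C_{c}^{d+1}(\mathbb{R}^{d})\subset L^{1}(\mathbb{R}^{d})$.

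There is no real obstacle here. The only point requiring care is justifying the Tonelli step, which needs joint measurability of the integrand on $\mathbb{R}^{d}\times\mathbb{R}^{d}$ with respect to $|\mu|\otimes ds$. That holds since the integrand is continuous, and $|\mu|$ is a finite Radon measure.
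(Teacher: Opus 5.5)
Your proof is correct and follows the paper's argument essentially verbatim: write $g=\mathcal{W}_{\psi}\mu$ via Theorem~\ref{main:thm1}, bound $|\tilde{g}_r(s)|$ by an integral against $|\mu|$, swap the order of integration by Tonelli, and substitute to obtain $\|\tilde{g}_r\|_1\le |r|^{d/2}\|\psi\|_1\|\mu\|_{TV}$. Your measurability remark and the identification $\|\mu\|_{TV}=\|g\|_{\psi}$ from Theorem~\ref{main;thm 2} are correct small additions that the paper leaves implicit.
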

\begin{proof}
    By Theorem \ref{main:thm1}, $g= \mathcal{W}_{\psi}\mu$ for some $\mu\in \mathcal{M}(\mathbb{R}^{d}).$ We fix $r\in \mathbb{R}^{\times}.$ Then 
    \begin{align*}
        \int_{\mathbb{R}^{d}} {|\tilde{g}_{r} (s)|}ds =&\int_{\mathbb{R}^{d}} \left|\mathcal{W}_{\psi} \mu(r,s)\right|ds\\ &=\int_{\mathbb{R}^{d}} \left[\left||r|^{-\frac{d}{2}} \int_{\mathbb{R}^{d}} \overline{\psi \left(\dfrac{x-s}{r}\right)} d\mu(x) \right|\right] ds\\
        &\leq |r|^{-\frac{d}{2}}\int_{\mathbb{R}^{d}} \int_{\mathbb{R}^{d}} \left|\psi \left(\dfrac{x-s}{r}\right)\right| d|\mu|(x) ds\\
        &=|r|^{-\frac{d}{2}} \int_{\mathbb{R}^{d}} \left[\int_{\mathbb{R}^{d}} \left|\psi \left(\dfrac{x-s}{r}\right)\right| ds\right] d|\mu|(x) \quad \left(\mbox{By Fubini-Tonelli theorem}\right)\\
        &=|r|^{\frac{d}{2}}  \int_{\mathbb{R}^{d}} \left[\int_{\mathbb{R}^{d}} |\psi(u)| du\right] d|\mu|(x) \quad \left(\mbox{Take $u= \dfrac{x-s}{r}$}\right)\\
        &= |r|^{\frac{d}{2}} \|\psi\|_{1} \|\mu\|_{TV}.
    \end{align*} This then gives that {$\tilde{g}_{r}\in L^{1}(\mathbb{R}^{d})$}.
\end{proof}
\begin{definition}
    Let $g\in \mathcal{F}_{\psi}$. Define $G: \mathbb{R}^{\times}\to \mathbb{C}$ by 
    \begin{equation*}
       { G(r)= \|\tilde{g}_{r}\|_{1} \quad \forall r\in \mathbb{R}^{\times}.}
    \end{equation*}
\end{definition}
\begin{proposition}\label{passinglimit}
    Let $g\in \mathcal{F}_{\psi}$. If
    \begin{equation}\label{refined assumption}
       \int_{0<|r|\leq 1} |r|^{-\left(1+ \frac{d}{2}\right)} G(r) dr< \infty,
    \end{equation} then 
    \begin{equation}\label{direct reconstruction}
        \mu(\mathcal{R})=\dfrac{1}{C_{\psi}} \int_{\mathbb{R}^{\times}} \int_{\mathbb{R}^{d}} g(r,s) \mathcal{W}_{\psi} \chi_{\mathcal{R}}(r,s) ds \dfrac{ dr}{|r|^{d+1}}
    \end{equation} for any closed rectangle $\mathcal{R}\subset \mathbb{R}^d.$
\end{proposition}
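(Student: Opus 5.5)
The plan is to start from Proposition \ref{proplimit} and justify moving the limit $\epsilon\to 0$ inside the double integral by dominated convergence on $\mathbb{R}^{\times}\times\mathbb{R}^{d}$ with respect to $ds\,\frac{dr}{|r|^{d+1}}$. With $f_\epsilon=\chi_{\mathcal{R}_\epsilon}*\Phi_\epsilon$ as constructed there, Proposition \ref{proplimit} gives
\begin{equation*}
\mu(\mathcal{R})=\lim_{\epsilon\to0}\frac{1}{C_\psi}\int_{\mathbb{R}^{\times}}\int_{\mathbb{R}^{d}} g(r,s)\,\mathcal{W}_\psi f_\epsilon(r,s)\,ds\,\frac{dr}{|r|^{d+1}} .
\end{equation*}
The identity \eqref{direct reconstruction} then follows once two facts are in place. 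First, $\mathcal{W}_\psi f_\epsilon(r,s)\to \mathcal{W}_\psi\chi_{\mathcal{R}}(r,s)$ pointwise. Second, there is an integrable majorant independent of $\epsilon$ (say for $0<\epsilon\le 1$).

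\textbf{Pointwise convergence.} For fixed $(r,s)$ we have $\mathcal{W}_\psi f_\epsilon(r,s)=\int f_\epsilon\,\overline{\psi_{r,s}}\,dx$. In the proof of Proposition \ref{proplimit} it was shown that $f_\epsilon\to\chi_{\mathcal{R}}$ pointwise and $|f_\epsilon|\le 1$. Since $\psi_{r,s}\in L^1(\mathbb{R}^d)$, dominated convergence in $x$ gives the claim.

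\textbf{Domination.} We use the two elementary bounds
\begin{equation*}
|\mathcal{W}_\psi f_\epsilon(r,s)|\le \|f_\epsilon\|_\infty\,|r|^{-d/2}\int|\psi(\tfrac{x-s}{r})|\,dx\le |r|^{d/2}\|\psi\|_1
\end{equation*}
and
\begin{equation*}
|\mathcal{W}_\psi f_\epsilon(r,s)|\le |r|^{-d/2}\|\psi\|_\infty \|f_\epsilon\|_1\le |r|^{-d/2}\|\psi\|_\infty\, m(\mathcal{R}_1)
\end{equation*}
(support of $f_\epsilon$ lies in a fixed compact set for $\epsilon\le1$). These give the majorant $|g(r,s)|\min\{|r|^{d/2}\|\psi\|_1,\ |r|^{-d/2}\|\psi\|_\infty K\}$, where $K:=m(\mathcal{R}_1)$ absorbs the enlarged-support volume. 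Integrating in $s$ and using $G(r)=\|\tilde g_r\|_1$, we check integrability on the two regions separately.

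\emph{Region $|r|>1$.} Use the second bound together with the estimate $G(r)\le |r|^{d/2}\|\psi\|_1\|\mu\|_{TV}$ from the preceding lemma. This gives an integrand of size $\lesssim |r|^{-d/2}|r|^{d/2}|r|^{-(d+1)}=|r|^{-(d+1)}$, which is integrable on $|r|>1$.

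\emph{Region $0<|r|\le1$.} This is the main obstacle. The crude bounds give $|r|^{d/2}G(r)|r|^{-(d+1)}=|r|^{-(1+d/2)}G(r)$, whose integrability is exactly the hypothesis \eqref{refined assumption}. So the first bound is precisely what is needed, and the assumption is tailored to close this step. I expect the vanishing moments cannot help here, since $\chi_{\mathcal{R}}$ is not smooth across $\partial\mathcal{R}$. This is why an extra assumption on $g$, rather than the decay from Lemma \ref{main:lemma1}, is required.

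\textbf{Conclusion.} Combining the two regions, dominated convergence applies to the double integral. We can therefore pass to the limit inside, and the limit in Proposition \ref{proplimit} equals the right-hand side of \eqref{direct reconstruction}. This also shows that this integral converges absolutely.
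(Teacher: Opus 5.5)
Your proposal is correct and follows the paper's proof. You start from Proposition \ref{proplimit}, obtain $\mathcal{W}_\psi f_\epsilon\to\mathcal{W}_\psi\chi_{\mathcal{R}}$ pointwise by dominated convergence in $x$, and dominate $g\,\mathcal{W}_\psi f_\epsilon$ separately on $0<|r|\le 1$ and on $|r|>1$. On $0<|r|\le 1$ the majorant $|r|^{d/2}\|\psi\|_1|g|$ makes integrability exactly the stated hypothesis, as in the paper.

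The only deviation is on $|r|>1$. There you control $\int|g(r,s)|\,ds=G(r)\lesssim |r|^{d/2}$ via the preceding lemma. The paper instead uses the pointwise bound $|g(r,s)|\lesssim|r|^{-d/2}$ together with the fact that $\mathcal{W}_\psi f_\epsilon(r,\cdot)$ is supported in a box of volume $O(|r|^{d})$. Both routes give the integrable rate $|r|^{-(d+1)}$, and yours avoids the support bookkeeping.
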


\begin{proof}
    Let $\mathcal{R}=\displaystyle \prod_{i=1}^{d} [a_i,b_i]$. We first claim that supp$(f_{\epsilon})\subset K_{\epsilon}$, where $K_{\epsilon}= \displaystyle \prod_{i=1}^{d} [a_i -2 \epsilon, b_i+ 2 \epsilon]$. Here, $\mathcal{R}_{\epsilon}$ is the closed rectangle $\displaystyle \prod_{i=1}^{d} [a_i - \epsilon, b_i+  \epsilon]$ and $f_{\epsilon}= \Phi_{\epsilon}* \chi_{\mathcal{R}_{\epsilon}}$. Suppose $x\notin K_{\epsilon}$. Then there exists $i=1,2,\cdots,d$ such that $x_{i} \notin [a_i- 2\epsilon, b_{i}+ 2 \epsilon]$. Without loss of generality assume that $x_{i} < a_{i}- 2 \epsilon$. Then for $y_{i} \in [a_{i}-\epsilon, b_{i}+\epsilon]$, we have 
    \begin{equation*}
        |y_{i}- x_{i}|\geq a_{i}-\epsilon-x_{i} > a_{i}- \epsilon+ 2\epsilon- a_{i}=\epsilon.
    \end{equation*} Recall that $\Phi_{\epsilon}(x-y)\neq 0$ if and only if $|x_i- y_i|< \epsilon$ for all $i=1,2,\cdots,d$. Therefore, $\Phi_{\epsilon}(x-y)=0$ for all $x\notin K_{\epsilon}$ and $y\in \mathcal{R}_{\epsilon}$. Fix $\epsilon_{1}>0$ and choose sufficiently small $\epsilon>0$ such that $\epsilon<\epsilon_{1}$. Thus, supp$(f_{\epsilon})\subset K_{\epsilon}\subset K_{\epsilon_1}= \displaystyle \prod_{i=1}^{d} [a_{i}- 2\epsilon_1, b_{i}+ 2\epsilon_1 ]$.

    Since $\psi$ is compactly supported, then there exists $R>0$ such that supp$(\psi)\subset [-R,R]^{d}.$ Furthermore, we know that supp$(\Phi_{\epsilon})\subset [-\epsilon, \epsilon]^{d}$.
     Let $K= [-R,R]^{d}\bigcap K_{\epsilon_1}$. We need to find out the set of all $s=(s_{1},s_{2}, \cdots, s_{d})\in \mathbb{R}^{d}$ such that $\mathcal{W}_{\psi} f_{\epsilon}(r,s)\neq 0$ for all $r\in \mathbb{R}^{\times}.$\\

     Suppose  $\mathcal{W}_{\psi} f_{\epsilon}(r,s)\neq 0$. That is, $\displaystyle |r|^{-\frac{d}{2}} \int_{\mathbb{R}^{d}} f_{\epsilon}(x) \overline{\psi \left( \dfrac{x-s}{r}\right)} dx \neq 0.$ Then there exists at least one $x\in K.$ That is, $x\in K_{\epsilon_{1}}$ and $\dfrac{x-s}{r}\in [-R,R]^{d}$. Equivalently, for all $i=1,2,\cdots,d$
     \begin{align*}
         -R|r|\leq x_i- s_i \leq R|r|&\iff -x_{i}- R|r|\leq -s_{i}\leq R|r| -x_{i} \\
         &\iff x_{i} - R|r|\leq s_{i}\leq R|r| + x_{i}\\
         &\iff a_{i}- 2 \epsilon_{1}- R|r| \leq s_{i}\leq b_{i}+ 2\epsilon_{1} + R|r|. 
     \end{align*} Let $\mathcal{K}_{i}= [a_{i}- 2 \epsilon_{1}- R|r|, b_{i}+ 2\epsilon_{1} + R|r|]$. Define $\mathcal{K}= \displaystyle \prod_{i=1}^{d} \mathcal{K}_{i}$. Note that $\mathcal{K}$ is compact. Thus, we have shown that $\mathcal{W}_{\psi}f_{\epsilon}(r,s)\neq 0$ implies that $s\in \mathcal{K}$. As a consequence 
     \begin{equation}\label{decompmolli}
         \mathcal{W}_{\psi}f_{\epsilon}(r,s)= \mathcal{W}_{\psi}f_{\epsilon}(r,s) \chi_{\mathcal{K}}(s)
     \end{equation}
 \noindent We look at two different cases: $|r|\leq 1$ and $|r|>1$. For the both cases we denote  $F_{\epsilon}(r,s)= g(r,s) \mathcal{W}_{\psi} f_{\epsilon}(r,s)$.\\
     
\noindent \textit{\textbf{Case 1: $(0<|r|\leq 1) $}}) Observe that 
\begin{align*}
    |F_{\epsilon}(r,s)|&= \left|g(r,s)\right|\left|\mathcal{W}_{\psi} f_{\epsilon}(r,s)\right|\\
    &= \left|g(r,s)\right| \left||r|^{-\frac{d}{2}}\int_{\mathbb{R}^{d}} f_{\epsilon}(x) \overline{\psi\left(\dfrac{x-s}{r}\right)} dx\right|\\
    &\leq |r|^{-\frac{d}{2}}  \left|g(r,s)\right| \int_{\mathbb{R}^{d}} \left|f_{\epsilon}(x)\right| \left|{\psi\left(\dfrac{x-s}{r}\right)}\right| dx\\
    &\leq |r|^{-\frac{d}{2}}  \left|g(r,s)\right| \int_{\mathbb{R}^{d}} \left|{\psi\left(\dfrac{x-s}{r}\right)}\right| dx\\
    &= |r|^{\frac{d}{2}}  \left|g(r,s)\right| \int_{\mathbb{R}^{d}} |\psi(y)| dy \quad \left(\mbox{Put $y= \dfrac{x-s}{r}$}\right)\\
    &= \|\psi\|_{1} |r|^{\frac{d}{2}}  \left|g(r,s)\right|.
\end{align*} Define $G(r,s)= |r|^{\frac{d}{2}}  \left|g(r,s)\right|$. Note that 
\begin{align*}
    \int_{0<|r|\leq 1} \int_{\mathbb{R}^{d}} |G(r,s)| ds \dfrac{dr}{|r|^{d+1}}&= \int_{0< |r|\leq 1} \int_{\mathbb{R}^{d}} |r|^{\frac{d}{2}}  \left|g(r,s)\right| ds \dfrac{dr}{|r|^{d+1}}\\
    &= \int_{0<|r|\leq 1} \int_{\mathbb{R}^{d}} |r|^{-\left(1+ \frac{d}{2}\right)}  \left|g(r,s)\right| ds dr\\
    &= \int_{0<|r|\leq 1} |r|^{-\left(1+ \frac{d}{2}\right)} \left[\int_{\mathbb{R}^{d}}\left|g(r,s)\right| ds \right]dr\\
    &= \int_{0<|r|\leq 1} |r|^{-\left(1+ \frac{d}{2}\right)} \|\tilde{g}(r,\cdot)\|_{1} dr\\
    &= \int_{0<|r|\leq 1} |r|^{-\left(1+ \frac{d}{2}\right)} G(r) dr\\
    &<\infty.
\end{align*} The last finiteness follows from Equation \ref{refined assumption}. This shows that $G(r,s)\in L^{1}\left(M \times \mathbb{R}^{d}, ds\dfrac{dr}{|r|^{d+1}}\right)$, where $M=\left\{r\in \mathbb{R}^{\times} : 0<|r|\leq 1\right\}$.
\\

    \noindent \textit{\textbf{Case 2:}} $(|r|>1)$ By Conditions $(a)$ and $(b)$ of Theorem \ref{main:thm1}, there exists $C_{6}>0$ such that 
    \begin{equation*}
        |g(r,s)|\leq C_{6} \|\psi_{r,s}\|_{\infty}\leq C_{6}|r|^{-\frac{d}{2}} \|\psi\|_{\infty}.
    \end{equation*} Set $C_{7}= C_{6} \|\psi\|_{\infty}.$ Furthermore, 
    \begin{align*}
        \left|\mathcal{W}_{\psi} f_{\epsilon}(r,s)\right|&\leq  |r|^{-\frac{d}{2}} \|\psi\|_{\infty} \|f_{\epsilon}\|_{1}.
    \end{align*} Observe that 
    \begin{align*}
        \int_{\mathbb{R}^{d}} |f_{\epsilon} (x)| dx= \int_{\mbox{supp}(f_{\epsilon})} |f_{\epsilon} (x)| dx \leq \int_{K_{\epsilon_{1}}} |f_{\epsilon} (x)| dx \leq m(K_{\epsilon_{1}}).
    \end{align*} Therefore, 
    \begin{equation*}
        |\mathcal{W}_{\psi} f_{\epsilon}(r,s)| \leq C_{8} |r|^{-\frac{d}{2}},
    \end{equation*} where $C_{8}= \|\psi\|_{\infty} m(K_{\epsilon_{1}}).$ By Equation \ref{decompmolli} together with preceeding estimates, 
    \begin{align*}
       | F_{\epsilon}(r,s)|& =\left|g(r,s) \mathcal{W}_{\psi} f_{\epsilon}(r,s)\right|\\&= \left|g(r,s) \mathcal{W}_{\psi} f_{\epsilon}(r,s)\right| \left|\chi_{\mathcal{K}}(s)\right|\\
        &\leq C_{9} |r|^{-d} \left|\chi_{\mathcal{K}}(s)\right|,
    \end{align*} where $C_{9}= C_{7} C_{8}$. Integrating with respect to $\dfrac{dr}{|r|^{d+1}}$ over the domain $\{a\in \mathbb{R}^{\times} : |a|>1\}$ yields that 
    \begin{align*}
        \int_{|r|>1} \int_{\mathbb{R}^{d}} |r|^{-d} \left|\chi_{\mathcal{K}}(s)\right| \dfrac{ds dr}{|r|^{d+1}}= \int_{|r|>1} |r|^{-d} m(\mathcal{K}) \dfrac{dr}{|r|^{d+1}}=\int_{|r|>1}  m(\mathcal{K}) \dfrac{dr}{|r|^{2d+1}} <\infty.
    \end{align*}  $m(\mathcal{K})$ is having maximum degree $|r|^d$ and hence with $|r|^{2d + 1}$ in the denominator, it is integrable. Hence, the finiteness in the right hand side is justified. 
    
     Note that $\mathcal{W}_{\psi} f_{\epsilon} (r,s)$ converges to $\mathcal{W}_{\psi} \chi_{\mathcal{R}}(r,s)$ for all $(r,s)\in \mathbb{R}^{\times} \times \mathbb{R}^{d}$. Indeed, for all $x\in \mathbb{R}^{d}$
    \begin{equation*}
        |r|^{-\frac{d}{2}}\left|f_{\epsilon}(x) \psi\left(\dfrac{x-s}{r}\right)\right|\leq |r|^{-\frac{d}{2}}  \left| \psi\left(\dfrac{x-s}{r}\right)\right| 
    \end{equation*} and 
    \begin{equation*}
       |r|^{-\frac{d}{2}} \lim_{\epsilon \to 0} f_{\epsilon}(x)  \psi\left(\dfrac{x-s}{r}\right)=  |r|^{-\frac{d}{2}} \chi_{\mathcal{R}}(x)\psi\left(\dfrac{x-s}{r}\right)
    \end{equation*} since $f_{\epsilon}(x)\to \chi_{\mathcal{R}}(x)$ as $\epsilon \to 0$. The use of dominated convergence theorem implies that $\mathcal{W}_{\psi} f_{\epsilon}(r,s)\to \mathcal{W}_{\psi} \chi_{\mathcal{R}}(r,s)$ as $\epsilon\to 0$ for all $(r,s)\in \mathbb{R}^{\times}\times \mathbb{R}^{d}.$ This then implies that $F_{\epsilon}(r,s)\to g(r,s)  \mathcal{W}_{\psi} \chi_{\mathcal{R}}(r,s)$ as $\epsilon \to 0$ for all $(r,s)\in \mathbb{R}^{\times} \times \mathbb{R}^{d}$.
    Therefore, applying dominated convergence theorem to Case 1 and Case 2, we get 
    \begin{equation*}
        \lim_{\epsilon\to 0}\dfrac{1}{C_{\psi}} \int_{0<|r|\leq 1} \int_{\mathbb{R}^{d}} g(r,s) \mathcal{W}_{\psi} f_{\epsilon}(r,s) ds \dfrac{ dr}{|r|^{d+1}}= \dfrac{1}{C_{\psi}} \int_{0<|r|\leq 1} \int_{\mathbb{R}^{d}} g(r,s) \mathcal{W}_{\psi} \chi_{\mathcal{R}}(r,s) ds \dfrac{ dr}{|r|^{d+1}}
    \end{equation*} and 
    \begin{equation*}
        \lim_{\epsilon\to 0}\dfrac{1}{C_{\psi}} \int_{|r|>1} \int_{\mathbb{R}^{d}} g(r,s) \mathcal{W}_{\psi} f_{\epsilon}(r,s) ds \dfrac{ dr}{|r|^{d+1}}=\dfrac{1}{C_{\psi}} \int_{|r|> 1} \int_{\mathbb{R}^{d}} g(r,s) \mathcal{W}_{\psi} \chi_{\mathcal{R}}(r,s) ds \dfrac{ dr}{|r|^{d+1}}.
    \end{equation*} 
    Then
    \begin{align*}
        \lim_{\epsilon\to 0}\dfrac{1}{C_{\psi}} \int_{\mathbb{R}^{\times}} \int_{\mathbb{R}^{d}} g(r,s) \mathcal{W}_{\psi} f_{\epsilon}(r,s) ds \dfrac{ dr}{|r|^{d+1}}&= \lim_{\epsilon\to 0}\dfrac{1}{C_{\psi}} \int_{0<|r|\leq 1} \int_{\mathbb{R}^{d}} g(r,s) \mathcal{W}_{\psi} f_{\epsilon}(r,s) ds \dfrac{ dr}{|r|^{d+1}}\\ & + \lim_{\epsilon\to 0}\dfrac{1}{C_{\psi}} \int_{|r|>1} \int_{\mathbb{R}^{d}} g(r,s) \mathcal{W}_{\psi} f_{\epsilon}(r,s) ds \dfrac{ dr}{|r|^{d+1}}\\
        &= \dfrac{1}{C_{\psi}} \int_{0<|r|\leq 1} \int_{\mathbb{R}^{d}} g(r,s) \mathcal{W}_{\psi} \chi_{\mathcal{R}}(r,s) ds \dfrac{ dr}{|r|^{d+1}}\\ & + \dfrac{1}{C_{\psi}} \int_{|r|> 1} \int_{\mathbb{R}^{d}} g(r,s) \mathcal{W}_{\psi} \chi_{\mathcal{R}}(r,s) ds \dfrac{ dr}{|r|^{d+1}}\\ &= \dfrac{1}{C_{\psi}} \int_{\mathbb{R}^{\times}} \int_{\mathbb{R}^{d}} g(r,s) \mathcal{W}_{\psi} \chi_{\mathcal{R}}(r,s) ds \dfrac{ dr}{|r|^{d+1}}.
    \end{align*}  Finally, using Proposition \ref{proplimit} we conclude that 
    \begin{equation*}
         \mu(\mathcal{R})=\dfrac{1}{C_{\psi}} \int_{\mathbb{R}^{\times}} \int_{\mathbb{R}^{d}} g(r,s) \mathcal{W}_{\psi} \chi_{\mathcal{R}}(r,s) ds \dfrac{ dr}{|r|^{d+1}}.
    \end{equation*}
This completes the proof.
\end{proof}
We provide an example that reconstructs the measure $\mu$ associated to a function $g\in \mathcal{F}_{\psi}$ using Proposition \ref{passinglimit}:
\begin{example}\label{eg3}
        Choose $f\in C_{c}^{2}(\mathbb{R}).$ Define 
        \begin{equation*}
            g(r,s)= \mathcal{W}_{\psi} f(r,s)
        \end{equation*} for all $(r,s)\in \mathbb{R}^{\times} \times \mathbb{R}.$ We show that $g\in \mathcal{F}_{\psi}$ as follows; Let $g= \mathcal{W}_{\psi} f$ for some $f\in C_{c}^{2}(\mathbb{R})$.  It is well-known that $g$ satisfies the condition $(a)$ of Theorem \ref{main:thm1} (See \cite[Section 2.5]{daubechies1992ten}). Next, we establish condition $(b)$ of Theorem \ref{main:thm1}. It is easy to see that for any $h\in C_{c}^{2}(\mathbb{R})$
            \begin{align*}
        \int_{\mathbb{R}^{\times}} \int_{\mathbb{R}} \int_{\mathbb{R}} \left| f(x) \psi_{r,s}(x) \mathcal{W}_{\psi} h(r,s)\right| dx ds \dfrac{ dr}{r^{2}}&\leq \|\psi\|_{\infty} \int_{\mathbb{R}^{\times}} \int_{\mathbb{R}} \int_{\mathbb{R}} \dfrac{1}{|r|^{\frac{1}{2}}} |f(x)| \left|\mathcal{W}_{\psi} h (r,s)\right| dx ds \dfrac{ dr}{r^{2}}\\
                &\leq \|\psi\|_{\infty} \int_{\mathbb{R}^{\times}} \int_{\mathbb{R}} \left[\int_{\mathbb{R}} |f(x)| dx\right] \dfrac{1}{|r|^{\frac{1}{2}}} \left|\mathcal{W}_{\psi} h (r,s)\right|ds\dfrac{ dr}{r^{2}}\\
                &= \|\psi\|_{\infty} \|f\|_{1} \int_{\mathbb{R}^{\times}} \int_{\mathbb{R}} \dfrac{1}{|r|^{\frac{1}{2}}} \left|\mathcal{W}_{\psi} h (r,s)\right| ds \dfrac{ dr }{r^{2}}\\&< \infty \quad (\mbox{Using Lemma \ref{main:lemma1}}).
            \end{align*} Then
            \begin{align*}
           \dfrac{1}{C_{\psi}}     \left|\int_{\mathbb{R}^{\times}} \int_{\mathbb{R}} \mathcal{W}_{\psi} f(r,s) \mathcal{W}_{\psi} h(r,s) ds \dfrac{ d r}{r^{2}} \right|&= \dfrac{1}{C_{\psi}}    \left|\int_{\mathbb{R}^{\times}} \int_{\mathbb{R}} \left[\int_{\mathbb{R}} \overline{\psi_{r,s}(x)} f(x) dx\right] \mathcal{W}_{\psi} h(r,s)\right|ds\dfrac{ dr}{r^{2}}\\
                &= \left| \int_{\mathbb{R}} f(x) \left[\dfrac{1}{C_{\psi}} \int_{\mathbb{R}^{\times}} \int_{\mathbb{R}} \overline{\psi_{r,s}(x)} \mathcal{W}_{\psi} h(r,s) ds \dfrac{ dr}{r^{2}}\right] dx\right| \left(\mbox{By Fubini-Tonelli}\right)\\
                & =\left| \int_{\mathbb{R}} f(x) h(x) dx\right| \quad \left(\mbox{By Lemma \ref{reconstruction lemma}} \right)\\
                &\leq \int_{\mathbb{R}} |f(x)| |h(x)| dx\\
                &\leq \|f\|_{1} \|h\|_{\infty}.
            \end{align*} Now observe that 
        \begin{align*}
          \left|\mathcal{W}_{\psi} f(r,s)\right| &= \dfrac{1}{|r|^{\frac{1}{2}}}  \left| \int_{\mathbb{R}} \overline{\psi\left(\dfrac{x-s}{r}\right)} f(x)dx\right|  \\
            &= \left|{|r|^{\frac{1}{2}}} \int_{\mathbb{R}} \overline{\psi(u)} f(ru+s)du\right| \quad \left(\mbox{Put $u= \dfrac{x-s}{r}$}\right)\\
            &= \left| {|r|^{\frac{1}{2}}} \int_{\mathbb{R}}  \overline{\psi(u)} \left[ f(s)+ f'(s) ru + f''(r\theta u + s) r^{2} u^{2} \right] du\right| \quad \left(\mbox{Apply Taylor's theorem to $f$}\right)\\
            &= \left|{|r|^{\frac{5}{2}}} \int_{\mathbb{R}} u^{2} \overline{\psi(u)} f''(r\theta u+ s) du \right| \quad \left(\mbox{Using vanishing moments}\right).
        \end{align*} Integrating with respect to $ds$ gives that 
        \begin{align*}
            \int_{\mathbb{R}} \left|\mathcal{W}_{\psi} f(r,s)\right| ds&\leq |r|^{\frac{5}{2}} \int_{\mathbb{R}} \int_{\mathbb{R}} u^{2} |{\psi(u)}|| f''(r\theta u+ s)| du ds\\
            &= |r|^{\frac{5}{2}} \int_{\mathbb{R}} u^{2} |\psi(u)| \left[\int_{\mathbb{R}} | f''(r\theta u+ s)| ds\right] du \quad \left(\mbox{By Fubini-Tonelli theorem}\right)\\
            &= |r|^{\frac{5}{2}} \int_{\mathbb{R}} u^{2} |\psi(u)| \left[\int_{\mathbb{R}} | f''( z)| dz\right] du \quad \left(\mbox{Take $z=r\theta u+ s $}\right)\\
            &= |r|^{\frac{5}{2}} \|f''\|_{1} \int_{\mathbb{R}} u^{2} |\psi(u)| du.
        \end{align*} Set $C= \|f''\|_{1} \displaystyle \int_{\mathbb{R}} u^{2} |\psi(u)| du$. Integrating both sides with respect to $\dfrac{dr}{|r|^{\frac{3}{2}}}$ over $\{r\in \mathbb{R}^{\times}: 0<|r|\leq 1\}$ yields that 
        \begin{align*}
            \int_{0<|r|\leq 1} \left[\int_{\mathbb{R}} \left|\mathcal{W}_{\psi} f(r,s)\right| ds\right] \dfrac{dr}{|r|^{\frac{3}{2}}}\leq C\int_{0<|r|\leq 1} |r| dr =C<\infty.
        \end{align*} This then satisfies the required integrability assumption in Equation \ref{refined assumption}. Using Proposition \ref{passinglimit} we get,
         $$ \mu([a,b])= \dfrac{1}{C_{\psi}}     \int_{\mathbb{R}^{\times}} \int_{\mathbb{R}} g(r,s) \mathcal{W}_{\psi} \chi_{[a,b]}(r,s) ds \dfrac{ dr}{r^{2}}.$$
        Observe that 
            \begin{align*}
              \int_{\mathbb{R}^{\times}} \int_{\mathbb{R}} \int_{\mathbb{R}} \left|\mathcal{W}_{\psi} f(r,s)\right| |\chi_{[a,b]}(x)| |\psi_{r,s}(x)|  dx ds\dfrac{ dr}{r^{2}}&= \int_{\mathbb{R}^{\times}} \int_{\mathbb{R}} \left|\mathcal{W}_{\psi} f(r,s)\right|\left[\int_{a}^{b} |\psi_{r,s}(x)| dx\right]ds  \dfrac{ dr}{r^{2}}\\
                &\leq \|\psi\|_{\infty}(b-a) \int_{\mathbb{R}^{\times}} \int_{\mathbb{R}} \dfrac{1}{|r|^{\frac{1}{2}}} \left|\mathcal{W}_{\psi} f(r,s)\right| ds \dfrac{ dr}{r^{2}}\\
                & < \infty.
            \end{align*} The last finiteness follows from Lemma \ref{main:lemma1} which helps us use Fubini-Tonelli's theorem in the following estimate,
            \begin{align*}
          \dfrac{1}{C_{\psi}}      \int_{\mathbb{R}^{\times}} \int_{\mathbb{R}} \mathcal{W}_{\psi} f(r,s) \mathcal{W}_{\psi} \chi_{[a,b]}(r,s) ds \dfrac{ dr}{r^{2}}&=   \dfrac{1}{C_{\psi}}  \int_{\mathbb{R}^{\times}} \int_{\mathbb{R}} \mathcal{W}_{\psi} f(r,s) \left[\int_{\mathbb{R}} \chi_{[a,b]}(x) \overline{\psi_{r,s}}(x) dx\right] ds \dfrac{ dr}{r^{2}}\\
                &= \int_{\mathbb{R}} \chi_{[a,b]}(x) \left[\dfrac{1}{C_{\psi}} \int_{\mathbb{R}^{\times}} \int_{\mathbb{R}} \mathcal{W}_{\psi} f(r,s) \overline{\psi_{r,s}}(x) ds \dfrac{ dr}{r^{2}}\right] dx (\mbox{Fubini-Tonelli})\\
                &=\int_{\mathbb{R}} \chi_{[a,b]}(x) f(x) dx \quad \left(\mbox{Using Lemma \ref{reconstruction lemma}} \right) \\
                &= \int_{a}^{b} f(x) dx.
            \end{align*} As a consequence, $ \mu([a,b])= \displaystyle \int_{a}^{b} f(x) dx.$
           \begin{remark}
               \begin{enumerate}
                   \item We remark that the Example \ref{eg3} can be extended to higher dimensions in a similar manner to evaluate the measure of a closed rectangle in $\mathbb{R}^{d}.$
                   \item Given a bounded, compactly supported function $f$ in $L^1(\mathbb{R})$, we can use the wavelet inversion formula for $f$ a.e., as in \cite[Proposition 2.4.2]{daubechies1992ten} instead of our use of Lemma \ref{reconstruction lemma}, to get the same result as in the Example \ref{eg3} above. Namely, if we choose $g\in \mathcal{F}_{\psi}$ such that $g(r, s)= \mathcal{W}_{\psi} f(r, s)$, then $\mu([a, b])=\displaystyle \int_a^bf(x) dx$. 
               \end{enumerate}
           \end{remark}

    \end{example} 
    The following example suggests that the integrability assumption given in Equation \ref{refined assumption} cannot be dropped from Proposition \ref{passinglimit}.
\begin{example} \label{eg4}

    Let $\psi$ denote a wavelet satisfying the hypothesis of Theorem \ref{main:thm1} and take $\mu= \delta_{0}$. Let $g= \mathcal{W}_{\psi} \mu$. We check that Equation \ref{refined assumption} holds for this $g$ or not. Note that 
        \begin{align*}
            \int_{\mathbb{R}} |r|^{-\frac{1}{2}} \left|\psi\left(-\dfrac{s}{r}\right)\right|ds&= |r|^{\frac{1}{2}} \int_{\mathbb{R}} |\psi(u)| du \quad \left(\mbox{Take $u=-\dfrac{s}{r}$}\right)\\
            &= |r|^{\frac{1}{2}} \|\psi\|_{1}.
        \end{align*} This shows that $G(r)= |r|^{\frac{1}{2}} \|\psi\|_{1}$. Then 
        \begin{align*}
            \int_{0<|r|\leq 1} |r|^{-\frac{3}{2}}  G(r)dr =\|\psi\|_{1} \int_{0<|r|\leq 1}  |r|^{\frac{1}{2}} |r|^{-\frac{3}{2}}  dr = \|\psi\|_{1} \int_{0<|r|\leq 1} |r|^{-1} dr.
        \end{align*} The last integral is divergent near zero.

Take $\varphi: \mathbb{R}\to \mathbb{R}$ by 
\begin{equation*}
    \varphi(x)= \begin{cases}
        \left(1- x^{2}\right)^{5}, &\quad |x|<1\\
        0,& \quad |x|\geq 1.
    \end{cases}
\end{equation*} Then $\varphi\in C_{c}^{4}(\mathbb{R})$. Choose $\psi= \varphi''$. Observe that 
\begin{equation*}
    \int_{\mathbb{R}} \psi(x) dx = \int_{\mathbb{R}}\varphi'' (x) dx= \varphi'(1)-\varphi'(-1)=0
\end{equation*} and 
\begin{equation*}
    \int_{\mathbb{R}} x\psi(x) dx = \int_{\mathbb{R}} x \varphi''(x) dx = \left[x \varphi'(x)\right]_{-1}^{1}- \int_{\mathbb{R}} \varphi'(x) dx =0.
\end{equation*} Note that $\psi$ is a wavelet satisfying the hypothesis of Theorem \ref{main:thm1}. Explicitly, 
\begin{equation*}
    \psi(x)= \begin{cases}
        10\left(1- x^2\right)^3 \left(9x^2 -1\right), & \quad |x|<1\\
        0, & \quad |x|\geq 1
    \end{cases}.
\end{equation*}
        
        We substitute $g$ into Equation \ref{direct reconstruction}. 
        We need to show that the function $\mathcal{W}_{\psi} \delta_{0} (r,s) W_{\psi} \chi_{[0,1]} (r,s)$ is not absolutely integrable with respect to $ ds \dfrac{ dr}{r^2}$. For,
        \begin{align*}
            \int_{\mathbb{R}^{\times}} \int_{\mathbb{R}} \left| g(r,s) \mathcal{W}_{\psi} \chi_{[0,1]}(r,s) \right|   ds\dfrac{ dr}{r^{2}}&\geq \int_{\mathbb{R}^{\times}} \left |\int_{\mathbb{R}} g(r,s) \mathcal{W}_{\psi} \chi_{[0,1]}(r,s) ds\right|  \dfrac{ dr}{r^{2}}\\
            &=\int_{\mathbb{R}^{\times}}\left| \int_{\mathbb{R}} |r|^{-1} \psi \left( -\dfrac{s}{r}\right)\left[\int_{0}^{1} \psi \left(\dfrac{x-s}{r} \right) dx\right] ds \right|  \dfrac{ dr}{r^{2}}\\
            &= \int_{\mathbb{R^{\times}}} |r|^{-1}\left|  \int_{\mathbb{R}} \int_{0}^{1}\psi \left(-\dfrac{s}{r}\right) \psi \left (\dfrac{x-s}{r } \right) dx ds\right|  \dfrac{ dr}{r^{2}}\\
            &=  \int_{\mathbb{R}^{\times}} |r|^{-3} \left|\int_{0}^{1} \int_{\mathbb{R}}  \psi \left(-\dfrac{s}{r}\right) \psi \left (\dfrac{x-s}{r } \right) ds dx \right|dr  \left(\mbox{Fubini-Tonelli theorem}\right)\\
            &= \int_{\mathbb{R}^{\times}} |r|^{-2} \left|\int_{0}^{1} \int_{\mathbb{R}} \psi(u) \psi\left(\dfrac{x}{r}+u\right) du dx\right| dr \quad \left (\mbox{Put $u= -\dfrac{s}{r}$}\right). 
        \end{align*} Denote 
        \begin{equation} \label{equationi}
            \displaystyle I(x)= \int_{\mathbb{R}} \psi(u) \psi(x+u) du.
        \end{equation}  Therefore, 
        \begin{equation*}
             \int_{\mathbb{R}^{\times}} \int_{\mathbb{R}} \left| g(r,s) \mathcal{W}_{\psi} \chi_{[0,1]}(r,s) \right|  ds \dfrac{ dr}{r^{2}}\geq  \int_{\mathbb{R}^{\times}} |r|^{-2} \left |\int_{0}^{1}  I\left(\dfrac{x}{r} \right) dx\right| dr.
        \end{equation*} 
        Observe that $I(0)= \displaystyle \int_{\mathbb{R}} \left[\psi(u)\right]^{2} du >0$ and $\psi(x)= \psi(-x)$ for all $x\in \mathbb{R}.$ Thus, for every $x\in \mathbb{R}$ the integral \ref{equationi} becomes
        \begin{align*}
            I(x)&=\int_{\mathbb{R}} \psi(-z) \psi(x-z) dz \quad (\mbox{Take $u=-z$})\\
            &=\int_{\mathbb{R}} \psi(z) \psi(x - z) dz \quad (\mbox{Since $\psi$ is even})\\
            &= \left(\psi* \psi\right)(x).
        \end{align*}  Since $\psi$ is compactly supported and continuous, then $I$ is uniformly continuous for all $x\in \mathbb{R}$, \cite[Proposition 8.8]{folland1999real}. Then for any $\epsilon>0$, there exists $\delta>0$ such that 
        \begin{equation*}
            |I(x)- I(0)|<\epsilon, \quad \mbox{whenever $|x|<\delta$}.
        \end{equation*} Choose $\epsilon= \dfrac{I(0)}{2}>0$. Thus, 
        \begin{equation*}
            I(x)> \dfrac{I(0)}{2}, \quad \mbox{whenever $|x|<\delta$}.
        \end{equation*}
         For sufficiently small $r>0$ such that $0< |r| \leq \dfrac{1}{\delta}$ and $\left|\dfrac{x}{r}\right|< \delta$, 
        \begin{equation*}
            \int_{0}^{1} I\left(\dfrac{x}{r}\right) dx \geq \int_{0}^{\delta |r|} I\left(\dfrac{x}{r}\right) dx \geq \dfrac{\delta I(0)}{2} |r|.
        \end{equation*}
         Integrating with respect to $dr$  over $M=\left\{0< |r| \leq \dfrac{1}{\delta}\right \}$ yields that 
         \begin{equation*}
             \int_{M} |r|^{-2} \int_{0}^{1} I\left(\dfrac{x}{r}\right)  dx dr \geq  \dfrac{\delta I(0)}{2} \int_{M} |r|^{-1} dr.
         \end{equation*} The right hand side is divergent, hence the integral $\displaystyle  \int_{\mathbb{R}^{\times}} \int_{\mathbb{R}} \left| g(r,s) \mathcal{W}_{\psi} \chi_{[0,1]}(r,s) \right|  ds \dfrac{ dr}{r^{2}}$ is also divergent and therefore not equal to $\delta_0([0, 1])$ unlike as in Example \ref{eg2}. 
        
    \end{example}

\section{Concluding Remarks}
Our study has been confined to wavelet transform of complex Radon measures on the Euclidean space, namely, $\mathcal{M}(\mathbb{R}^d)$ with specific restrictions on the analyzing wavelet. In particular, we have established necessary and sufficient conditions for a measurable function on $\mathbb{R}^{\times}\times \mathbb{R}^{d}$ to be the continuous wavelet transform of a complex Radon measure on $\mathbb{R}^{d}$
under suitable regularity and vanishing-moment assumptions on the analyzing wavelet. We showed that the range of continuous wavelet transform of complex Radon measures, $\mathcal{F}_{\psi}$, with a suitable norm, is isometrically isomorphic to the space $\mathcal{M}(\mathbb{R}^d)$ endowed with the total variation norm and inherits a Banach algebra structure. We also obtained a reconstruction formula to compute the value of the measure of closed rectangles from a given function $g$ in $\mathcal{F}_{\psi}$ through a limiting process, and provided a direct approach using continuous wavelet transform of characteristic functions, under an additional integrability assumption on the measurable function $g$. One direction of further work would be to relax the restrictions on the wavelet. Another direction would be to try and extend the formulation to that of Locally Compact Abelian groups. 

\section{Acknowledgments}
We thank the reviewers for their useful comments and suggestions. Vaishakh Jayapraksh is supported by the INSPIRE Ph.D Fellowship of the Department of Science and Technology, Government of India. Both the authors want to sincerely thank Prof. M.N.N. Namboodiri, retired Professor, Department of Mathematics, CUSAT. We are highly indebted to him for his deep insights and suggestions which guided and inspired us throughout this work.
\section{Declaration on the Use of AI Tools}
During the preparation of this work, the authors used Open AI for language polishing, reference checking, and to identify typographical and expository errors. No mathematical content, proofs, or derivations were generated by AI tools. The authors reviewed and verified the entire manuscript and take full responsibility for the content of this publication.
\bibliographystyle{abbrv}
	\nocite{*}
\bibliography{Wavelet_Measure}

@Book{daubechies1992ten,
  author    = {Daubechies, Ingrid},
  publisher = {SIAM},
  title     = {Ten lectures on wavelets},
  year      = {1992},
}

@book{folland1999real,
  title={Real analysis: modern techniques and their applications},
  author={Folland, Gerald B},
  year={1999},
  publisher={John Wiley \& Sons}
}

@article{ashurov2011continuous,
  title={On continuous wavelet transforms of distributions},
  author={Ashurov, Ravshan and Butaev, Almaz},
  journal={Applied mathematics letters},
  volume={24},
  number={9},
  pages={1578--1583},
  year={2011},
  publisher={Elsevier}
}

@article{pathak2004wavelet,
  title={The wavelet transform of distributions},
  author={Pathak, Ram S},
  journal={Tohoku Mathematical Journal, Second Series},
  volume={56},
  number={3},
  pages={411--421},
  year={2004},
  publisher={Mathematical Institute, Tohoku University}
}

@article{grossmann1985transforms,
  title={Transforms associated to square integrable group representations. I. General results},
  author={Grossmann, Alex and Morlet, Jean and Paul, T},
  journal={Journal of Mathematical Physics},
  volume={26},
  number={10},
  pages={2473--2479},
  year={1985},
  publisher={American Institute of Physics}
}

@book{holschneider1995wavelets,
  author    = {Holschneider, Matthias},
  title     = {Wavelets: An Analysis Tool},
  series    = {Oxford Mathematical Monographs},
  publisher = {Clarendon Press},
  address   = {Oxford},
  year      = {1995},
  isbn      = {978-0198534815}
}

@book{Meyer1992Wavelets,
  author    = {Meyer, Yves},
  title     = {Wavelets and Operators},
  publisher = {Cambridge University Press},
  year      = {1992},
  series    = {Cambridge Studies in Advanced Mathematics},
  volume    = {37},
  translator = {Salinger, D. H.},
  address   = {Cambridge, UK},
  isbn      = {9780521420006}
}

@book{triebel2006theory,
  title={Theory of function spaces },
  author={Triebel, Hans},
  year={2006},
  publisher={Springer}
}

@article{saneva2010wavelet,
  title={Wavelet expansions and asymptotic behavior of distributions},
  author={Saneva, Katerina and Vindas, Jasson},
  journal={Journal of Mathematical Analysis and Applications},
  volume={370},
  number={2},
  pages={543--554},
  year={2010},
  publisher={Elsevier}
}

@article{rieder1990wavelet,
  title={The wavelet transform on Sobolev spaces and its approximation properties},
  author={Rieder, Andreas},
  journal={Numerische Mathematik},
  volume={58},
  number={1},
  pages={875--894},
  year={1990},
  publisher={Springer}
}

@article{pandey2019continuous,
  title={Continuous wavelet transform of Schwartz distributions},
  author={Pandey, Jagdish N},
  year={2019}
}

@article{pandey2019continuous1,
  title={Continuous wavelet transform of Schwartz tempered distributions},
  author={Pandey, JN and Upadhyay, SK},
  journal={Cogent Mathematics \& Statistics},
  volume={6},
  number={1},
  pages={1623647},
  year={2019},
  publisher={Taylor \& Francis}
}

@article{holschneider1994fractal,
  title={Fractal wavelet dimensions and localization},
  author={Holschneider, Matthias},
  year={1994}
}

@article{antoine19992,
  title={The 2-D wavelet transform, physical applications and generalizations},
  author={Antoine, JP},
  journal={Wavelets in physics},
  pages={23--75},
  year={1999},
  publisher={Cambridge University Press}
}

@article{rao1994application,
  title={Application of Carleson’s theorem to wavelet inversion},
  author={Rao, Murali and SikiC, Hrvoje and Song, Renming},
  journal={Control Cybern},
  volume={23},
  pages={761--771},
  year={1994}
}

@book{folland2016course,
  title={A course in abstract harmonic analysis},
  author={Folland, Gerald B},
  year={2016},
  publisher={CRC press}
}
\end{document}